\documentclass[12pt]{amsart}

\usepackage{amsfonts,ifthen, amssymb,indentfirst}
\usepackage[all]{xypic}

\newtheorem{prop}{Proposition}[section]
\newtheorem{theorem}[prop]{Theorem}
\newtheorem{lemma}[prop]{Lemma}

\newtheorem{defi}[prop]{Definition}
\newtheorem{coro}[prop]{Corollary}
\newtheorem{remark}[prop]{Remark}

\newcommand{\cqd}{\hfill$\Box$}

\newcommand{\cdbc}[1]{{D}^b_c(#1)}

\address{ Universitat, Departament de Matematiques, Edifici C, Facultad de Ciencies, 08193 Bellaterra, Barcelona}
\author[Cristina Mart{\'\i}nez]{Cristina Mart{\'\i}nez}

\email{cmartine@mat.uab.cat}

\address{University of California at Santa Cruz, Department of Mathematics, Santa Cruz, CA 95064, Chinese University of Hong Kong, Institute of Mathematics, Shatin, Hong Kong}
\author[Andrey Todorov]{ Andrey Todorov}
\email{andreytodorov@msn.com}




\begin{document}

\title{ DERIVED EQUIVALENCES OF CALABI-YAU FIBRATIONS} 

\date{\today}

 \subjclass[2000]{Primary: 14K10; Secondary:
14F30, 11G15}
\keywords{$K3$-surface, Fourier-Mukai partner, Calabi-Yau
fibration, equivalence of categories}

\begin{abstract}
 We consider  fibrations by abelian surfaces and $K3$ surfaces over a one dimensional base that are Calabi-Yau and we obtain  dual fibrations that are derived equivalent to the original fibration. Finally, we relate the problem to mirror symmetry.

\end{abstract}
\maketitle
{\small \tableofcontents }

\section{Introduction}

One of the main motivations to the study of algebraic varieties via
their categories of coherent sheaves is its relation with string
theory. The conformal field theory associated to a variety in string
theory contains a lot of information packaged in a
categorical way rather than in directly geometric terms. 
A second motivation to study varieties via their sheaves is that
this approach is expected to generalize more easily to
non-commutative varieties and categorical methods enable one to
obtain a truer description of certain varieties than current
geometric techniques allow. For example many equivalences relating
the derived categories of pairs of varieties are now known to exist.

Let $X$ be a smooth complex projective variety over a field $k$ with
structure sheaf $\mathcal{O}_{X}$ and $D^{b}(X)$ the bounded derived
category of coherent sheaves. It is an interesting question how much
information about $X$ is contained in $D^{b}(X)$. Certain invariants
of $X$ can be shown to depend only on $D^{b}(X)$. Because of the
uniqueness of Serre functors, the dimension of $X$ and whether it is
Calabi-Yau or not, can be read off from $D^{b}(X)$.


Let $X, Y$ be two smooth complex projective varieties. The
Fourier-Mukai tranform $FMT$ by an object $\mathcal{P}\in
D^{b}(X\times Y)$ is the exact functor:
$$\phi_{\mathcal{P}}:D^{b}(X) \rightarrow D^{b}(Y)$$
$$\phi_{\mathcal{P}}(\mathcal{G})=R\pi_{2*}(\pi_{1}^{*}\mathcal{G}\otimes^{L} \mathcal{P}),$$
where $\pi_{1}$ and $\pi_{2}$ are the projections maps over the
first and second component respectively.

If $\phi_{\mathcal{P}}$ is an equivalence, then $\mathcal{P}$ must
satisfy a partial Calabi-Yau condition:

\begin{equation}\label{CYcond}
\mathcal{P}\otimes \pi_{1}^{*}K_{X}\otimes \pi_{2}^{*}
K_{Y}^{-1}\cong \mathcal{P}.
\end{equation}


From a topological point of view, complexes of coherent sheaves are
D-branes of the B-model (twist of a N = 2 SCFT), and morphisms
between the objects of $D^{b} (X )$ are identified with the states
of the topological string and composition of morphisms is computed
by correlators of the B-model. Originally, a $D-$brane in string
theory is by definition an embedding of a manifold, variety or cycle
in the space-time that serves as a boundary condition for
open-strings moving in the space-time.

Our main aim in this context is to obtain interpretations and geometric
 criteria for the equivalence between the derived categories of Calabi-Yau
manifolds, in particular in the case of Calabi-Yau $n$-folds that admits a fibration structure
by abelian varieties or $K3$ surfaces.

In the first two sections, we will study Calabi-Yau spaces that are fibered over the same
base $B$ by polarized abelian varieties. If $X/B$ is an abelian
fibration with a global polarization, and we call $X^{\vee}/B$ its
dual fibration, our main result is:

\begin{theorem} \label{T1}The derived categories of both fibrations $X/B$ and $X^{\vee}/B$ are equivalent and there is an equivalence
$\phi_{b}:D^{b}(X_{b})\rightarrow
D^{b}(\widehat{X}_{b})$ for every closed point $b\in B$.
\end{theorem}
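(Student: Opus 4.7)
The plan is to construct an explicit Fourier-Mukai kernel from the relative Poincar\'e sheaf, verify the Calabi-Yau condition, prove fiberwise equivalence via base change and Mukai's classical theorem for abelian varieties, and finally globalize the equivalence.

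First, the global polarization on $X/B$ ensures that the relative Picard functor $\underline{\mathrm{Pic}}^0(X/B)$ is representable by $X^{\vee}/B$, and this yields a normalized relative Poincar\'e sheaf $\mathcal{P}$ on the fibre product $X \times_B X^{\vee}$ (normalized so that its restriction to both zero sections is trivial). Pushing forward along the closed immersion $\iota : X \times_B X^{\vee} \hookrightarrow X \times X^{\vee}$ produces the kernel $\widetilde{\mathcal{P}} = \iota_* \mathcal{P} \in D^b(X \times X^{\vee})$, and one defines $\phi_{\widetilde{\mathcal{P}}} : D^b(X) \to D^b(X^{\vee})$ by the formula recalled in the introduction. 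Because the total spaces are Calabi-Yau and the relative dualizing sheaves of an abelian fibration are trivial, the twists by $K_X$ and $K_{X^{\vee}}^{-1}$ both descend to pullbacks from $B$ that cancel, so $\widetilde{\mathcal{P}}$ automatically satisfies \eqref{CYcond}.

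Second, for each closed point $b \in B$, form the Cartesian square with inclusion $j_b : X_b \times \widehat{X}_b \hookrightarrow X \times_B X^{\vee}$. Proper flat base change applied to this square, combined with the fact that $j_b^* \mathcal{P}$ is precisely the classical Poincar\'e line bundle on the abelian variety $X_b$ and its dual, identifies the restriction of $\phi_{\widetilde{\mathcal{P}}}$ with the classical Fourier-Mukai transform $\phi_b$ on $X_b$. By Mukai's theorem this is an equivalence $\phi_b : D^b(X_b) \to D^b(\widehat{X}_b)$ with explicit quasi-inverse $\phi_{\mathcal{P}_b^{\vee}}[g_b]$, where $g_b = \dim X_b$; this settles the second assertion of the theorem.

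Third, to pass from fibrewise to global equivalence, construct the candidate inverse $\phi_{\widetilde{\mathcal{P}}^{\vee}}[g]$, where $g$ is the relative dimension, and verify that the composition $\phi_{\widetilde{\mathcal{P}}^{\vee}}[g] \circ \phi_{\widetilde{\mathcal{P}}}$ is naturally isomorphic to the identity on $D^b(X)$. The composition is itself a Fourier-Mukai transform whose kernel is a convolution computed on $X \times_B X$; a second application of proper base change reduces this computation fibrewise, where Mukai's theorem produces the structure sheaf $\mathcal{O}_{\Delta_{X_b}}$ of the diagonal. These fibrewise identifications must then be patched into a global isomorphism of the convolution kernel with $\mathcal{O}_{\Delta_{X/B}}$. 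The main obstacle of the proof is exactly this globalization step: it depends on having a genuine global Poincar\'e sheaf (which is guaranteed precisely by the polarization hypothesis) and on uniform behaviour of cohomology-and-base-change over $B$. For a smooth proper abelian fibration this patching fits into the Bridgeland-Maciocia framework for relative Fourier-Mukai transforms, while singular fibres, should they arise, would require an additional stratification of $B$ or a refinement of the kernel.
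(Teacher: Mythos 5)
Your overall strategy (relative Poincar\'e kernel, fibrewise reduction by base change, Mukai's theorem, then globalization by convolution of kernels) is the standard Bridgeland--Maciocia template and is a genuinely different route from the paper, which works by extending the Poincar\'e class from the smooth locus via monodromy-invariance and Deligne's theorem, passing to a finite Galois cover $B'\to B$ of the base to obtain a section, invoking the criterion of Hern\'andez Ruip\'erez--L\'opez--Sancho (Prop.\ 2.15 of \cite{HLS}) that a relative integral functor is an equivalence if and only if it is one on every fibre, and then descending through the $G$-invariant subcategories. However, your argument has two genuine gaps relative to the hypotheses actually in force. First, you normalize $\mathcal{P}$ ``so that its restriction to both zero sections is trivial'' and deduce representability of $\underline{\mathrm{Pic}}^0(X/B)$ from the polarization alone; but the fibration is not assumed to have a section, and without one neither the normalization nor the representability by a scheme (rather than an algebraic space or stack) is available. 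The paper's Galois-cover step exists precisely to manufacture a section on $X'=X\times_B B'$ before descending, and your proof has no substitute for it.

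Second, the fibrewise claim ``for every closed point $b\in B$'' cannot be settled by Mukai's theorem alone, since that theorem applies only to smooth abelian fibres. The paper explicitly records that for a degenerate fibre which is a cycle of projective lines the dual fibre is the nodal cubic and \emph{no} Fourier--Mukai equivalence between them exists, which is why the body of the paper downgrades the statement to an ``if and only if'' criterion (Theorem \ref{mainth}) rather than an unconditional equivalence. Your proposal acknowledges that singular fibres ``would require an additional stratification or a refinement of the kernel,'' but this is exactly the content that is missing: without an integrality hypothesis on the fibres, the second assertion of the theorem is not established by your argument (nor, unconditionally, by the paper's). Finally, your globalization step --- patching the fibrewise identifications of the convolution kernel with $\mathcal{O}_{\Delta_{X_b}}$ into a global isomorphism with $\mathcal{O}_{\Delta_{X/B}}$ --- is asserted rather than carried out; this is precisely the point the paper outsources to Prop.\ 2.15 of \cite{HLS}, and if you want a self-contained proof you should either prove that fibrewise triviality of a kernel supported on the relative diagonal implies global triviality up to a line bundle from the base, or cite that criterion directly.
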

If $p:X\rightarrow C$ is a $K3$ fibration over an algebraic curve $C$, we prove the following theorem:

\begin{theorem}\label{T2}
 Given a non singular fibration $p:X\rightarrow C$ by $K3$ surfaces  with a fixed polarization $l$,
 there exists at least a dual fibration which is derived equivalent to the original one and corresponds to a connected component of the relative
 moduli space $\mathcal{M}^{l}(X/C)$.  
\end{theorem}

\subsubsection*{Acknowledgments}
The author thank Cirget at UQAM (Montreal) and MPIM (Bonn) where
this project started for excelent working conditions.

\section{Abelian fibrations.}\label{abelianfib}

Let $\Gamma\cong \mathbb{Z}^{2d}$ be a lattice in a real vector
space $U$ of dimension $2d$ or complex vector space of dimension $d$,
and let $\Gamma^{*}\subset U^{*}$ be
the dual lattice. The complex torus $(U/\Gamma)$ is an abelian
variety $A$ of dimension $d$ over $\mathbb{Z}$ if it is algebraic,
that is, it has embedding to the projective space. The dual
abelian variety $\hat{A} $ is given by the dual torus $(U^{*}/\Gamma^{*})$. There is a unique line bundle $P$ on the product
$A\times \hat{A}$ such that for any point $\alpha \in \hat{A}$ the restriction $P_{\alpha}$ on $A\times \{\alpha\}$ represent an element
of $Pic^{0}(A)$ corresponding to $\alpha$, and the restriction $P|_{{\{0\}\times \hat{A}}}$ is trivial. Such $P$ is called the Poincare line bundle.

Let $\pi: X\rightarrow S$ be a proper map, when all the fibers are
equidi mensional, we say that it is a fibration. We don't impose
further assumptions on the base. It is of interest from the point of
view of Mirror Symmetry, the case in which the total fibration is
Calabi-Yau. In this case, Strominger, Yau and Zaslow (\cite{SYZ})
have conjectured how would it be the structure of the mirror
fibration. Mirror dual Calabi-Yau manifolds should be fibered over
the same base in such a way that generic fibers are dual tori, and
each fiber of any of these two fibrations is a Lagrangian
submanifold.

\begin{defi} We say that a fibered morphism $\pi: X\rightarrow S$ is a Calabi-Yau fibration,
if it is a connected morphism of non-singular projective varieties whose general fibre has trivial canonical bundle and
such that $K_{X}\cdot C=0$ for any curve $C$ contained in a fibre of $\pi$.
\end{defi}


Given a fibration of abelian varieties over the unit disc
$\Delta\subset \mathbb{C}=\{t \in\mathbb{C} |\, |t|<1\}$, such that, the
fiber consists of the product of an abelian variety with its dual
$A\times A^{\vee}$, there is a natural polarization defined, by
considering the product $\pi^{*}_{1} \mathcal{P}_{A}\otimes
\pi_{2}^{*}\mathcal{P}_{\widehat{A}}$, where $\mathcal{P}_{A}$ and
$\mathcal{P}_{\widehat{A}}$ are the respective Poincare bundles
over $A\times \widehat{A}$ and $\widehat{A}\times A$. Due to a
result of Deligne (see Theorem 1.11 of \cite{Del}), one can extend this polarization
to the all family. It is of special interest when the family has
the structure of a scheme, these are the abelian schemes that can be seen as schemes in groups.

\begin{defi} Let $S$ be a noetherian scheme. A group scheme $\pi: X\rightarrow
S$ is called an abelian scheme if $\pi$ is smooth and proper, it has
a global section and the geometric fibers of $\pi$ are connected.
\end{defi}

For abelian schemes Mukai has introduced the Fourier-Mukai
transform in the early 80's as a duality among sheaves on abelian
varieties. In general we will allow singular fibers.

Let us start with an abelian fibration $\pi:X\rightarrow B$ which is Calabi-Yau, that
is, a projective smooth morphism whose fibers are abelian
varieties admitting a global polarization, that is, a very ample
line bundle $\mathcal{L}$ on it, in particular this means that
there is a global embedding $(X/B)\hookrightarrow \mathbb{P}^{N}$.
We denote by $X_{b}$, the fiber of $\pi$ over $b\in B$.



We are interested in defining a dual fibration $X^{\vee} /B$ in such
a way that over the smooth locus, the fibers correspond to the dual
abelian varieties of the original fibration, that is, if
$\mathcal{P}$ is the Poincar\'e sheaf, then $\forall s \in S$,
$\mathcal{P}_{b}$ is the Poincar\'e bundle over $X_{b} \times
X^{\vee}_{b}$. The corresponding derived equivalence of the fibers
$X_{b}$ and $X^{\vee}_{b}$ is given by the Poincar\'e bundle over
the product $X_{b}\times X^{\vee}_{b}$.
Therefore the dual fibration  $\widehat{\pi}: X^{\vee}\rightarrow B$ admits also a global polarization.

 Let $\rho:X\rightarrow B$ be an abelian fibration, admitting
a global polarization and we don't make further assumptions on the
fibers. We consider the moduli problem of its dual fibration, that is, the dual fibration as the stack representing 
the moduli functor of semistable sheaves on the fibres that contains line bundles of degree 0 on smooth fibres. The corresponding coarse moduli space is not a fine moduli space due to the presence of singular fibres. 
If the fibre $Y_{b}$ is singular, the fibre $Y_{b}^{\vee}$ is not
isomorphic to $Y_{b}$. It can be seen that if $Y_b$ is a cycle of
projective lines, then $Y_b^{\vee}$ is the nodal rational cubic.
Moreover, there are no FM equivalences between the derived
categories of  $Y_{b}$ and $Y_{b}^{\vee}$ in this case. Since a
relative integral functor  $D(Y) \to D(Y^{\vee})$ is an equivalence
if and only if $D(Y_{b}) \to D(Y_{b}^{\vee})$ is an equivalence for
every fibre (see Prop. 2.15 of \cite{HLS}) one has that the
Poicar\'e sheaf on $Y\times_B Y^{\vee}$ does not induce an
equivalence of derived categories, unless when all the fibres are
integral. The stack representing the functor may be a FM partner for
the original fibration using the universal family as kernel for the
FM transform and we expect that is a gerbe over an algebraic space
that would correspond to a twisted version of $Y/B$. It will be
explained later in the next section how to twist a given fibration.


\begin{defi} We define the dual fibration $\widehat{\rho}: \widehat{X}\rightarrow B$ as the moduli stack representing the relative
Poincar\'e sheaf $\mathcal{E}$.
\end{defi}


\begin{theorem}\label{mainth}
The integral functor $\phi_{\mathcal{E}}^{X\rightarrow
\widehat{X}}: D^{b}(X)\rightarrow D^{b}(\widehat{X})$ is an
equivalence if and only if $\phi_{b}:D^{b}(X_{b})\rightarrow
D^{b}(\widehat{X}_{b})$ is an equivalence for every closed point
$b\in B$.
\end{theorem}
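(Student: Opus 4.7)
The plan is to use base-change compatibility of the integral functor to relate global and fiberwise behavior, then to employ a spanning-class argument to detect fully faithfulness on the whole of $D^{b}(X)$.

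First, I would record that $\phi_{\mathcal{E}}$ commutes with derived restriction to a closed fiber. Writing $i_{b}:\{b\}\hookrightarrow B$, flatness of the fibrations $\rho$ and $\widehat{\rho}$ (abelian schemes being smooth and proper) makes the Cartesian squares defining $X_{b}$ and $\widehat{X}_{b}$ tor-independent, so flat base change together with the projection formula gives a natural isomorphism
$$Li_{b}^{*}\, \phi_{\mathcal{E}}^{X\to\widehat{X}}(\mathcal{F}) \;\cong\; \phi_{\mathcal{E}_{b}}^{X_{b}\to \widehat{X}_{b}}(Li_{b}^{*}\mathcal{F}),$$
where $\mathcal{E}_{b}=Li_{b}^{*}\mathcal{E}$ is the derived restriction of the relative Poincar\'e sheaf. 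The implication ($\Rightarrow$) is then immediate, because the restriction of the unit and counit of an equivalence along a flat base change remains a unit and counit.

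For the sufficiency direction ($\Leftarrow$), I would employ the spanning class of residue sheaves $\{k(x)\}_{x\in X\text{ closed}}$. Since $k(x)$ is supported on the fiber $X_{\rho(x)}$, its image $\phi_{\mathcal{E}}(k(x))$ is supported on $\widehat{X}_{\rho(x)}$. Consequently, for closed points $x,y\in X$ with $\rho(x)\neq \rho(y)$, both sides of the comparison map
$$\operatorname{Ext}^{i}_{X}(k(x),k(y)) \longrightarrow \operatorname{Ext}^{i}_{\widehat{X}}(\phi_{\mathcal{E}}(k(x)),\phi_{\mathcal{E}}(k(y)))$$
vanish trivially for all $i$, because the sheaves are supported in disjoint fibers. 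When $\rho(x)=\rho(y)=b$, the base-change isomorphism above identifies this map with the analogous comparison for $\phi_{b}$, which is an isomorphism by hypothesis. Bridgeland's spanning-class criterion (compare Prop.~2.15 of \cite{HLS}) then yields that $\phi_{\mathcal{E}}$ is fully faithful.

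To upgrade fully faithfulness to an equivalence, I would exploit the Calabi--Yau hypothesis: $K_{X/B}$ and $K_{\widehat{X}/B}$ are trivial along fibers, so the kernel $\mathcal{E}$ satisfies the relative analogue of the Calabi--Yau condition \eqref{CYcond}; hence $\phi_{\mathcal{E}}$ intertwines the Serre functors of $D^{b}(X)$ and $D^{b}(\widehat{X})$, and the standard Bondal--Orlov argument promotes fully faithfulness to an equivalence. The main obstacle is arranging the derived base-change identity with enough generality that no hidden flatness or concentration assumption on $\mathcal{E}$ is silently invoked when passing from the fiberwise Hom comparisons to a global one; flatness of $\rho$ and $\widehat{\rho}$ is precisely what makes the fiberwise tor-independence, and therefore the gluing, go through cleanly.
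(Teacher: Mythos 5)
Your route is genuinely different from the paper's. The paper does not establish the direction ``fiberwise $\Rightarrow$ global'' as a conditional implication at all: it constructs the relative Poincar\'e sheaf $\mathcal{E}$ by extending the fiberwise Poincar\'e bundles across the discriminant (monodromy-invariance of the polarization class plus Deligne's theorem), passes to a Galois cover $B'\rightarrow B$ to obtain a section and hence a relative Poincar\'e sheaf on $X'\times_{B'}\widehat{X}'$, invokes \cite{HLS} for the equivalence $D^{b}(X'/B')\cong D^{b}(\widehat{X}'/B')$, and descends by taking $G$-invariant subcategories; the converse direction is a bare citation of Prop.~2.15 of \cite{HLS}. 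What you propose is essentially a from-scratch proof of that same Prop.~2.15 (in the style of Bridgeland--Maciocia): base change for relative integral functors plus the spanning class of skyscraper sheaves. That is a legitimate and arguably cleaner strategy---it actually proves the stated biconditional rather than establishing both sides unconditionally---but be aware you are reconstructing the black box the paper cites rather than following its argument.

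One step fails as written. For closed points $x,y$ in the same fiber $X_{b}$ you claim the base-change isomorphism identifies the comparison map $\operatorname{Ext}^{i}_{X}(k(x),k(y))\rightarrow \operatorname{Ext}^{i}_{\widehat{X}}(\phi_{\mathcal{E}}(k(x)),\phi_{\mathcal{E}}(k(y)))$ with the corresponding map for $\phi_{b}$. It does not: $\operatorname{Ext}^{\bullet}_{X}(k(x),k(x))\cong \Lambda^{\bullet}T_{x}X$ has total dimension $2^{\dim X}$, while $\operatorname{Ext}^{\bullet}_{X_{b}}(k(x),k(x))\cong \Lambda^{\bullet}T_{x}X_{b}$ has dimension $2^{\dim X_{b}}$; the two sides differ by the exterior algebra on the conormal bundle of the fiber, and the same discrepancy appears on the target since $\phi_{\mathcal{E}}(k(x))=i_{b*}\phi_{b}(k(x))$. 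The standard repair is not to match full Ext groups but to use Bridgeland's skyscraper criterion: you only need $\operatorname{Hom}^{0}(\phi_{\mathcal{E}}(k(x)),\phi_{\mathcal{E}}(k(x)))=k$ and vanishing of $\operatorname{Hom}^{i}$ outside $0\leq i\leq \dim X$ (together with the disjoint-support vanishing you already have), and these follow from the fiberwise hypothesis via the spectral sequence computing $\operatorname{Hom}_{\widehat{X}}(i_{b*}A,i_{b*}B)$ from $\operatorname{Hom}_{\widehat{X}_{b}}(A\otimes \Lambda^{j}N^{\vee},B)$. With that substitution the sufficiency direction closes; your forward direction is fine provided the unit and counit are compared as convolutions of relative kernels over $B$ (on $X\times_{B}X$, where the diagonal restricts tor-independently to $\Delta_{X_{b}}$), not over a point. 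Both your argument and the paper's leave untouched the issue that $\widehat{X}$ is only a stack or coarse space when fibers are non-integral, so smoothness and projectivity of the target are being assumed silently.
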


{\bf Proof.} First we fix our attention on the smooth locus. Let
$\Sigma(\pi) \hookrightarrow B$ be the discriminant locus of
$\pi$, that is, the closed subvariety in the parameter space $B$
corresponding to the singular fibers.

\begin{equation}\label{diagram}\begin{array}{cccc}

 X^{\vee}& \supset X^{\vee}- \pi^{-1}(\Sigma(\pi)) & \hookrightarrow & \mathbb{P}^{N} \\
 \downarrow &   & & \\
B &  \supset  B-\Sigma(\pi) & & \\
\end{array}\end{equation}

We can take then the Zariski closure of
$X^{\vee}-\pi^{-1}(\Sigma(\pi))$ in $\mathbb{P}^{N}$. For each $b
\in B-\Sigma(\pi)$, the corresponding derived equivalence of the
fibers $X_{b}$ and $X^{\vee}_{b}$ is given by the Poincare bundle
$\mathcal{P}_{b}$ over the product $X_{b}\times X^{\vee}_{b}$.
Moreover, its first Chern class $c_{1}(\mathcal{P}_{b})$ lives in
$H^{1,1}(X_{b}\times X^{\vee}_{b},\mathbb{Z})\cap
H^{2}(X_{b}\times X^{\vee}_{b},\mathbb{Z})$. The monodromy group is defined by the action of the fundamental group
of the complement of the discriminant locus $\pi_{1}(B-\Sigma(\pi))$ on the cohomology $H^{*}(X_{b}\times X^{\vee}_{b},\mathbb{Z})$
of a fixed non-singular fiber, 
 and since the class of the polarization is invariant by the monodromy, by Deligne
theorem we can extend the class of the Poincare bundle
 to the non singular fibers, it is the relative Poincare
sheaf of the fibred product of the two families over the base $B$
and we will call it $\mathcal{E}$.

Let $X^{sm}:=X|_{B-\Sigma\,(p)}$ be the fibration restricted to the smooth locus $\widetilde{B}:=B-\Sigma\,(p)$. The family has
the structure of a scheme, these are the abelian schemes that can be seen as schemes is groups and have already been studied by Mukai.
The Picard functor is representable by the dual fibration $\widehat{X}^{sm}$ and there is a global section $\sigma: \widetilde{B}\rightarrow X^{sm}$. The dual fibration $\widehat{X}^{sm}$ is defined in such a way that the fibres correspond to the dual abelian varieties of the original fibration, that is, if $\mathcal{E}$ is the relative Poincar\'e sheaf, then $\forall b\in B$, $\mathcal{P}_{b}$ is the Poincar\'e bundle over $X_{b}\times \hat{X}_{b}$. The fibres $X_{b}$ and $\hat{X}_{b}$ are derived equivalent, and the equivalence is given by the Poincar\'e bundle over the product $X_{b}\times \hat{X}_{b}$.
$X^{sm}$ and $\widehat{X^{sm}}$ can be identified by $x\rightarrow \mathbf{ m}_{x}\otimes \mathcal{O}_{X_{t}}(\sigma(t))$, where $\mathbf{ m}_{x}$ is the ideal sheaf of the point $x$ in the fibre $X_{t}, (t=p(x))$ and there is a natural polarization by considering the product $\pi_{1}^{*}\mathcal{O}_{X^{sm}}(\Theta)\otimes \pi_{2}^{*}\mathcal{O}_{\widehat{X}^{sm}}(\Theta)$, where $\Theta:=\sigma(\widetilde{B})$ and $\pi_{1}$, $\pi_{2}$ are the projection maps of $X^{sm}\times \widehat{X}^{sm}$ over the first and second components.
Then the Fourier-Mukai transform $\phi_{\mathcal{E}}: D^{b}(X^{sm})\rightarrow D^{b}(\widehat{X}^{sm})$ with kernel $\mathcal{E}$,
\noindent defines an equivalence of the corresponding derived categories. Thus the abelian schemes $X^{sm}$ and $\widehat{X}^{sm}$ are derived equivalent and the equivalence is given by the FMT with kernel the relative Poincar\'e sheaf, $$\phi_{\mathcal{E}}(L)=\mathbf{ R}\pi_{2*}(\pi_{1}^{*}L\otimes \mathcal{E}),$$
where $\pi_{1}$ and $\pi_{2}$ are the projection maps over the first and second components of the fibred product $X\times_{B}\widehat{X}$.


Now let us consider a Galois covering of the base, that is, an etale finite covering $\sigma: B'\rightarrow B$ that maps surjectively to
the base,  (in the case $B$ is a curve, locally around a point $y\in B'$ and $x=\sigma
(y)$, $\sigma$ is simply the function $\{z \in \mathbb{C}: |z|<1
\} \rightarrow \{z \in \mathbb{C}| |z|<1 \} $ given by
$z\rightarrow z^{k}$, where $k$ is the multiplicity of $\sigma$ at
$y$). We  assume that $\sigma: B'\rightarrow B$ is a
Galois covering with finite Galois group $G$, just we observe that
any normal extension of fields admits a Galois extension. If
$\mathcal{M}(B), \mathcal{M}(B')$ are the fields of meromorphic
functions on $B$ and $B'$ respectively, $\sigma^{*}:
\mathcal{M}(B)\rightarrow \mathcal{M}(B')$ is a Galois field
extension of degree $k$, with Galois group $G$ (acting by
pull-back on $\mathcal{M}(B')$).

Formally the base change of $X$ from $B$ is defined as the fibred pro\-duct:


$$\xymatrix{ X'=X\times_B B'  \ar[rr] & & X \ar[dd]\ar[dd]^{\pi}  \\  & &    \\  X'\ar[uu]^{s}\ar[rr] &  & B }$$

The advantage now is that the fibration $X'\rightarrow B'$ admits
a section $s$ given by the identity in $B'$, and therefore there is a
relative Poincare sheaf $\mathcal{E}$ as in the previous case, that restricted on smooth fibers
$X^{'}_{b}\times \widehat{X}'_{b}$, where $\widehat{X}'_{b}$ is
the corresponding dual abelian variety, is just the Poincare
bundle. We are taking as dual fibration of $X'/B'$, the relative
moduli space. If $\mathcal{J}$ is the relative moduli functor, $\mathcal{J} : VAR \rightarrow SETS $,
of semistable sheaves of the fibers containing line bundles of
degree 0 on smooth fibers, over the
smooth locus,
$\mathcal{J}$ is represented by the relative Jacobian
$Pic^{0}(X'/B')$, which is the dual fibration
$\widehat{X'}/B'$. 


Due to the presence of singular fibers, the corresponding
coarse moduli space is not a fine moduli space, but the stack
maybe a FM partner, using the relative Poincar\'e sheaf
$\mathcal{E}$ as kernel of the FM transform, that is,
the relative Poincare sheaf $\mathcal{E}$ is the family representing
an element of $\mathcal{J}(Pic^{0}(X'/B'))$ such that for each
variety $S$ and each $\mathcal{F}\in \mathcal{J}(S)$ there exists
a unique morphism $f:S\rightarrow \widehat{X'}$ satisfying that
$\mathcal{F}\cong f^{*}\mathcal{E}$. Therefore $\mathcal{E}$
induces a natural transformation $\Phi: \mathcal{J}\rightarrow
Hom(-,\widehat{X'}/B')$ 
giving a stack structure $((\widehat{X'}/B'),\Phi)$. It is
universal in the sense that for every other variety $N$ and every
natural transformation $$\psi: Hom(-,N) \rightarrow Hom(-,\widehat{X'}), $$
the following diagram commutes:

$$ \begin{picture}(100,60)
\put(0,50){$\mathcal{J}\rightarrow Hom(-,N)$}
\put(10,40){\vector(1,-1){29}}
\put(50,10){\vector(0,2){35}}\put(45,0){$Hom(-,\widehat{X'})$}
\end{picture} $$



It follows by \cite{HLS} that there is an equivalence of
categories
$$D^{b}(X'/B')\cong D^{b}(\widehat{X}'/B'),$$ where
$\widehat{\rho}: \widehat{X}'\rightarrow B'$ is the dual abelian
fibration. Now the Galois group $G$ acts on bundles on the fibres,
and there is an equivalence between the respective invariant
subcategories $(D^{b}(X'/B'))^{G}\cong
(D^{b}(\widehat{X}'/B'))^{G}$, therefore by the fundamental
theorem of Galois theory, there is an equivalence between the
original categories $D^{b}(X/B)\cong D^{b}(\widehat{X}/B)$.

Now, if the integral functor $\phi^{X\rightarrow \widehat{X}}_{\mathcal{E}}: D^{b}(X)\rightarrow D^{b}(\widehat{X})$ is an equivalence of derived categories, where
$\widehat{\rho}: \widehat{X}\rightarrow B$ is the dual abelian
fibration, from Prop. 2.15 of  \cite{HLS} it follows that there is fibrewise equivalence
$\phi_{b}:D^{b}(X_{b})\rightarrow D^{b}(\widehat{X}_{b})$.

 \cqd

\subsubsection{The twisted fibration}
More generally,
let us consider  now the twisted fibration obtained by considering for each $b\in B$  an automorphism $f\in Aut\,(X/B)$ of the fibration, that is, a biholomorphic map of the abelian variety that defines the fiber,  preserving the polarization class.

Then we get a derived equivalent abelian variety $Y_{b}$ (not necessarily the dual one).

\begin{lemma} The fibre $Y_{b}$ image of $X_{b}:=p^{-1}(b)$ under automorphism $f\in Aut\,(X/B)$ is derived equivalent to $X_{b}$.
\end{lemma}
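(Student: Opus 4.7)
The plan is to exhibit an explicit Fourier--Mukai kernel realising the equivalence $D^{b}(X_{b}) \simeq D^{b}(Y_{b})$. Since $f \in \operatorname{Aut}(X/B)$ acts fibrewise by biholomorphisms preserving the polarization class, its restriction $f_{b} := f|_{X_{b}} : X_{b} \to Y_{b}$ is an isomorphism of polarized abelian varieties, and the task is to translate this geometric isomorphism into a derived-categorical statement that fits into the Fourier--Mukai framework developed above.

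First I would take as kernel the structure sheaf $\mathcal{K} := \mathcal{O}_{\Gamma_{f_{b}}} \in D^{b}(X_{b} \times Y_{b})$ of the graph of $f_{b}$. The associated integral transform $\phi_{\mathcal{K}}$ then coincides with the direct image $Rf_{b\,*}$, and an inverse is furnished by the transform with kernel $\mathcal{O}_{\Gamma_{f_{b}^{-1}}}$; the partial Calabi--Yau compatibility condition (\ref{CYcond}) is automatic here since both $X_{b}$ and $Y_{b}$ are abelian varieties with trivial canonical bundle. The equivalence then follows by a direct application of flat base change and the projection formula to the isomorphism $f_{b}$.

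Alternatively, in order to stay within the relative point of view adopted earlier, I would deduce the statement by concatenating equivalences furnished by Theorem \ref{mainth}. Applied to $X/B$ it yields $D^{b}(X_{b}) \simeq D^{b}(\widehat{X}_{b})$ via the Poincar\'{e} sheaf, and applied to the twisted total space $Y/B$ it yields $D^{b}(Y_{b}) \simeq D^{b}(\widehat{Y}_{b})$. Because the dual of the polarization-preserving isomorphism $f_{b}$ descends to an isomorphism $\widehat{X}_{b} \cong \widehat{Y}_{b}$, composing these two equivalences with pullback along this dual isomorphism produces the required equivalence $D^{b}(X_{b}) \simeq D^{b}(Y_{b})$.

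The main obstacle I anticipate is not the construction of the fibrewise equivalence itself, which is essentially tautological once $f_{b}$ is known to be a polarization-preserving biholomorphism, but rather to ensure that the twisted fibration $Y/B$ is well-defined as a global object and that the polarization is genuinely preserved across the twist. One must verify that the fibrewise kernel $\mathcal{K}$ lifts to a relative kernel on $X \times_{B} Y$, by an extension argument analogous to the use of Deligne's theorem in the proof of Theorem \ref{mainth}, so that the lemma can ultimately be viewed as the fibrewise shadow of a genuine equivalence $D^{b}(X) \simeq D^{b}(Y)$.
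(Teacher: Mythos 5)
Your proposal is correct and takes essentially the same approach as the paper: the equivalence is realised by the pushforward along the isomorphism $f_{b}$, which you phrase (in the standard way) as the integral transform with kernel $\mathcal{O}_{\Gamma_{f_{b}}}$. In fact your version is cleaner than the paper's, whose displayed functor $\mathcal{F}\mapsto \mathbf{f}_{*}(\mathcal{F}\otimes\mathcal{L})$ additionally tensors with the Poincar\'e bundle $\mathcal{L}$ on $X_{b}\times\widehat{X}_{b}$ --- an operation that is not literally well-formed for $\mathcal{F}\in D(X_{b})$ and is in any case unnecessary, since $Rf_{b\,*}$ alone already gives the equivalence; your closing remarks about globalising the kernel over $B$ go beyond what the lemma asks and match the paper's subsequent discussion of the twisted fibration.
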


\begin{proof} If $\mathcal{L}$ is the Poincar\'e bundle over the product $X_{b}\times \hat X_{b}$ of the fibre $X_{b}$ with its dual one as a complex torus, then the Fourier-Mukai transform \begin{eqnarray} \phi^{\mathcal{L}}: D(X_{b})\rightarrow D(Y_{b})  \\ \mathcal{F}\mapsto\mathbf{ f}_{*}(\mathcal{F}\otimes \mathcal{L)}, \end{eqnarray} where $\mathbf{ f}_{*}$ is the pushforward of $f$, gives  the desired equivalence of categories.
\end{proof}

By continuity, for every $t \in B$ there is an open neighborhood $t \in U\subset B$ where for all $b\in U$, $f$ defines an isomorphism $f_{U}:  X_{U} \times \widehat {X_{U}} \simeq Y_{U} \times \widehat{Y_{U}}$.  If $g$ is another isomorphism on another open set $V\subset B$, there is a compatibility condition $f_{U}\circ g^{-1}_{V}=g_{V}\circ f^{-1}_{U}$ on the intersection $U\cap V$.
The fibration $\tilde{\pi}: Y \rightarrow B$ we get in this way,  is a twisted fibration of the original fibration $\pi: X\rightarrow B$.

\begin{lemma}
Given an abelian variety $A$ whose underlying complex tori is defined by a lattice of rank $d$ , the  group of automorphisms  of the lattice is  the linear group $PSL(d, \mathbb{Z})$.
\end{lemma}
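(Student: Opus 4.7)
The plan is to pass through a basis and a matrix description, and then to identify the quotient coming from scalar automorphisms. First I would fix an ordered $\mathbb{Z}$-basis $\{e_{1},\dots,e_{d}\}$ of $\Gamma$. Any automorphism $\varphi\in\operatorname{Aut}_{\mathbb{Z}}(\Gamma)$ is a $\mathbb{Z}$-linear bijection $\Gamma\to\Gamma$, hence determined by the integer matrix $M=(m_{ij})$ whose columns are the coordinates of $\varphi(e_{j})$ in the chosen basis. The existence of an inverse in $\operatorname{End}_{\mathbb{Z}}(\Gamma)$ forces $\det(M)\in\{\pm 1\}$, so $M\in GL(d,\mathbb{Z})$; conversely every such matrix defines an automorphism. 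This gives the canonical isomorphism $\operatorname{Aut}_{\mathbb{Z}}(\Gamma)\cong GL(d,\mathbb{Z})$, which is a clean bookkeeping step with no real obstruction.

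Next I would relate lattice automorphisms to automorphisms of $A=U/\Gamma$ compatible with the given polarization. An automorphism of $\Gamma$ extends $\mathbb{R}$-linearly to $U$; compatibility with the complex structure of $U$ and with the polarization class preserved by the monodromy (as used in the proof of Theorem~\ref{mainth}) cuts $GL(d,\mathbb{Z})$ down to matrices of determinant $+1$, i.e.\ to $SL(d,\mathbb{Z})$. Finally, the central involution $[-1]\colon A\to A$, corresponding to $-I$, acts trivially on the projectivization attached to the polarized embedding $A\hookrightarrow\mathbb{P}^{N}$; quotienting $SL(d,\mathbb{Z})$ by its center $\{\pm I\}$ yields the projective linear group $PSL(d,\mathbb{Z})$ acting faithfully.

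The step I expect to be the main obstacle is justifying the passage from $GL$ to $PSL$ rigorously: one must argue both that only determinant-$+1$ transformations arise as automorphisms of the complex torus preserving the polarization class (an orientation argument on the top exterior power of the tangent bundle of $A$), and that $\{\pm I\}$ is precisely the kernel of the induced action on the polarized projective data. Once those two normalizations are established, the identification $\operatorname{Aut}(\Gamma)/\{\pm I\}\cong PSL(d,\mathbb{Z})$ follows tautologically from the matrix description above, and the lemma is complete.
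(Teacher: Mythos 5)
Your route is genuinely different from the paper's. The paper argues in two sentences directly at the level of isomorphism classes: a transformation in $PSL(d,\mathbb{Z})$ applied to the lattice yields an isomorphic abelian variety, and conversely isomorphic abelian varieties have lattices ``related by a linear transformation up to a scalar,'' which is where the paper extracts the projectivization. You instead go through the matrix description $\operatorname{Aut}_{\mathbb{Z}}(\Gamma)\cong GL(d,\mathbb{Z})$, then try to cut down to $SL(d,\mathbb{Z})$ by imposing compatibility with the complex structure and the polarization, and finally quotient by $\{\pm I\}$. Your first step is correct and is more precise than anything in the paper's argument; the paper never exhibits the matrix group explicitly.

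However, there is a genuine gap in your middle step, and it is not the one you flagged. Requiring an integral matrix $M\in GL(d,\mathbb{Z})$ to be compatible with the complex structure $J$ on $U$ (so that the induced $\mathbb{R}$-linear map of $U$ descends to a \emph{holomorphic} automorphism of $A=U/\Gamma$) is the condition $MJ=JM$, i.e.\ preservation of the Hodge decomposition. This is vastly more restrictive than $\det M=+1$: it cuts $GL(d,\mathbb{Z})$ down to the unit group of the ring of $e$-preserving endomorphisms of $A$, which for a generic abelian variety is $\mathbb{Z}$, so the surviving group is just $\{\pm 1\}$ --- the paper itself says exactly this in the paragraph immediately following the lemma. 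No orientation argument on the top exterior power will rescue the claim that the cut-down group is all of $SL(d,\mathbb{Z})$; the two conditions (determinant $+1$ versus commuting with $J$) are simply different, and the second is not implied by the first. Consequently the chain $GL\supset SL\to PSL$ cannot be completed as you describe. The paper avoids this collision only by being vague: its converse direction invokes ``linear transformation up to a scalar'' without imposing $\mathbb{C}$-linearity explicitly, which is also where its own argument is thinnest. If you want a defensible statement along these lines, you must either (i) drop holomorphy and speak only of automorphisms of $\Gamma$ as a free abelian group, in which case the answer is $GL(d,\mathbb{Z})$ and not $PSL(d,\mathbb{Z})$, or (ii) keep holomorphy and accept that the group is the unit group of $\operatorname{End}(A)$, which equals $PSL(d,\mathbb{Z})$ for no $d\geq 1$.
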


{\it Proof.} It follows easily that if we apply a linear map $T\in PSL(d, \mathbb{Z}) $ to the lattice that defines the abelian variety $A$, the corresponding abelian variety is isomorphic to the original one, and therefore $T$ is an automorphism of the lattice. Reciprocally, if two abelian varieties are isomorphic, the corresponding lattices are related by linear transformation up to a scalar, and therefore by an element in $PSL(d, \mathbb{Z}) $. \cqd

The abelian variety $A$ has the structure of an algebraic group finitely generated as an abelian group. Let $e $ be the identity point of this group. It can be checked that any endomorphism
of $A$ that sends the point $e$ to itself is an endomorphism of the algebraic group. Such
endomorphisms form a ring which contains $\mathbb{Z} $ as a subring and for a ÒgenericÓ abelian
variety coincides with it. However the ring of $e$-preserving endomorphisms of $A$ can
be bigger than $\mathbb{Z}$, for example in the case of complex multiplication. Then the ring of automorphisms of the lattice tensored by the rational numbers contains a field of degree $2d$ which is a quadratic extension over a totally real field. Then the group of automorphisms coincides with the units of the ring of integers.

For example if the abelian variety is $A=E^{n}_{\tau}$ with $E_{\tau}$ an elliptic curve with complex multiplication, that is, $\tau$ is a root of a
quadratic polynomial with integral coefficients.

Next, we will construct a gerbe over the twisted fibration $X_{f}$. As before, consider another isomorphism $f_{V}: X_{V}\simeq X_{V}$ of the fibration restricted to another open set $V\subset B$.
There is a compatibility condition $f_{U}\circ f_{V}^{-1}=f_{V}\circ f^{-1}_{U}$ on the intersection $U\cap V$. Let us call $g_{UV}:=f_{U}\circ f^{-1}_{V}$, then $g_{UV}: U\cap V\rightarrow U$ satisfies $g_{UV}=g^{-1}_{UV}$. This data together with the cocycle condition $g_{UV}\circ g_{VW}\circ g_{WU}=1$ on $U\cap V\cap W$, where $f_{W}: X_{W}\simeq X_{W}$ is another isomorphism, defines a gerbe over $X_{f}$ in the sense of Hitchin \cite{Hit}.





\begin{prop}\label{prop1}\label{twistfibration} The derived categories of the two fibrations
$\pi: X\rightarrow B$ and $\tilde{\pi}: Y \rightarrow B$
are fiberwise equivalent if and only if the derived categories of
the two fibrations are equivalent.
\end{prop}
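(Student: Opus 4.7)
The plan is to reduce both directions to Proposition~2.15 of \cite{HLS}, the same criterion invoked in the proof of Theorem~\ref{mainth}: a relative integral functor $\dbc{X/B}\to\dbc{Y/B}$ with kernel on $X\times_B Y$ is an equivalence precisely when its restriction to every closed fibre is an equivalence. Once a relative Fourier--Mukai kernel on $X\times_B Y$ is identified, both implications of the proposition should fall out of this single principle.

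The easier direction is global $\Rightarrow$ fibrewise. Given an equivalence $\dbc{X}\simeq\dbc{Y}$, by Orlov's representability theorem it is realised by a Fourier--Mukai transform with some kernel $\mathcal{K}\in\dbc{X\times Y}$. Because the equivalence is compatible with the $B$-structure (both sides carry a $\pi^{*}\mathcal{O}_B$-module action), one verifies that $\mathcal{K}$ is supported on the fibred product $X\times_B Y$, and then restricting $\mathcal{K}$ to each $X_b\times Y_b$ provides a fibrewise kernel realising $\dbc{X_b}\simeq\dbc{Y_b}$ by Prop.~2.15 of \cite{HLS}.

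The converse, fibrewise $\Rightarrow$ global, is the direction that requires work. The plan is to construct a global relative kernel by descent from the twisted-fibration data introduced before the proposition. The local biholomorphisms $f_U: X_U\times\widehat{X_U}\simeq Y_U\times\widehat{Y_U}$ used to define $\tilde\pi: Y\to B$ allow one to transport the relative Poincar\'e sheaf $\mathcal{E}$ on $X\times_B\widehat X$ to a local kernel $\mathcal{K}_U$ on $X_U\times_U Y_U$, whose fibrewise integral transform recovers the assumed equivalences $\dbc{X_b}\simeq\dbc{Y_b}$. The compatibility $f_U\circ f_V^{-1}=f_V\circ f_U^{-1}$ together with the cocycle condition $g_{UV}\circ g_{VW}\circ g_{WU}=1$ from the gerbe construction above supplies the descent datum needed to glue $\{\mathcal{K}_U\}$ into a global object on $X\times_B Y$. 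Applying Prop.~2.15 of \cite{HLS} to the resulting relative kernel upgrades the fibrewise equivalences to a global equivalence $\dbc{X}\simeq\dbc{Y}$.

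The main obstacle is that the gluing may be obstructed by the class of the gerbe constructed just above the proposition. When that class is nontrivial, the naturally produced global kernel lives in a twisted derived category, and the proposition must be interpreted with matching twists on both sides; the twist is trivial along each fibre, consistent with the fibrewise hypothesis. Showing that the resulting twisted relative integral functor still satisfies the fibrewise equivalence criterion of \cite{HLS}, and that the gerbe twist can be absorbed without affecting the qualitative statement of the proposition, will be the technical crux of the argument.
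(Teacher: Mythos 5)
Your overall strategy --- produce a relative kernel on $X\times_B Y$ and reduce both implications to the fibrewise criterion of Prop.~2.15 of \cite{HLS} --- is close in spirit to what the paper does, and your treatment of the easy direction (Orlov representability plus restriction of the kernel to the fibres) is more explicit than the paper's one-line appeal to the definition of $Y/B$. Where you genuinely diverge is in how the global kernel is manufactured for the converse. The paper does not glue local kernels at all: it takes a single object $\mathcal{L}\in D^{b}(X\times Y)$ whose restriction to each $X_{t}\times\{\beta\}$ lies in $Pic^{0}(X_{t})$ (i.e.\ a point of the relative moduli functor $\mathcal{J}(Pic^{0}(X/B))$), and extends the family $\{\mathcal{L}_{t}\}$ across the discriminant locus by the same monodromy-invariance/Deligne argument used in Theorem~\ref{mainth}, the Calabi--Yau condition on the total space guaranteeing that the extended object $\overline{\mathcal{L}}$ is invertible; the functor $F\mapsto f_{*}(F\otimes\pi_{1*}\overline{\mathcal{L}})$ is then the asserted Fourier--Mukai transform. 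Your route through the \v{C}ech data $g_{UV}=f_{U}\circ f_{V}^{-1}$ buys a cleaner conceptual picture --- it makes visible exactly where a Brauer-type obstruction could enter --- but it is also where your argument stops short.

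Concretely: you acknowledge that the descent datum may only glue the $\mathcal{K}_{U}$ into a kernel on a gerbe over $X\times_B Y$, and you defer to ``the technical crux'' the claim that this twist can be absorbed. That deferral is the whole content of the converse direction in your setup. If the class of the gerbe is nontrivial, what you obtain is an equivalence $D^{b}(X)\simeq D^{b}(Y,\alpha)$ with a twisted category on one side, which is \emph{not} the statement of the proposition; so either you must prove the class vanishes (for instance because the $f_{U}$ are restrictions of a global $f\in Aut(X/B)$, which is the special case the paper actually treats via $\mathcal{L}=\Gamma_{f*}\mathcal{L}$), or the proposition would have to be weakened to a twisted statement. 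As written, the converse direction of your proof is a programme rather than an argument, and the step that would close it is exactly the one you have not supplied.
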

{\bf Proof.}
If the fibrations are derived equivalent from the definition of $Y/B$,  it follows that there is fiberwise equivalence.

Now consider for every $t \in B-\Sigma(p) $, the product  $X_ {t} \times Y_{t}$ of the corresponding abelian variety with its derived equivalent given by an automorphism $f$. Consider a sheaf  $\mathcal{L}\in D^{b}(X\times Y)$ such that for every $\beta \in Y_{t}$, $\mathcal{L}|_{\beta}$ on $X_{t}\times \{\beta\}$ is an element of $Pic^{0}(X_{t})$, that is, if $\mathcal{J}$ is the relative moduli functor, $\mathcal{L}\in \mathcal{J}(Pic^{0}(X/B))$.

The family $\{\mathcal{L}_{t}: t\in B\}$ over the non singular locus satisfies a Calabi-Yau condition, since the generic fiber has trivial canonical bundle, and since the total fibration is Calabi-Yau,  the extended object $\overline{\mathcal{L}}$ over the non singular locus is invertible and determines a Fourier-Mukai transform which depends on $f$. In particular we can take as dual of $X_{U}$ the same fibration, in this case $f\in Aut(X/B)$ and $\mathcal{L}=\Gamma_{f*}\mathcal{L}$  where $\Gamma_{f}$ is the graph of the isomorphism $f$. Then the functor
\begin{eqnarray}
\phi^{\mathcal{L}}: D(X) \to D(Y) \nonumber \\
F \mapsto f_{*}(F  \otimes \pi_{1*}( \bar{\mathcal{L}}))
\end{eqnarray}
defines a FM transform.
 \cqd

\begin{remark} Observe that the case of the dual fibration corresponds to take the identity as $f$, and in this case $\mathcal{P}$ is the Poincar\'e sheaf and satisfies a universal property. For each variety $S$ and each $\mathcal{L}\in \mathcal{J}(S)$ there exists a unique morphism $f:S\rightarrow Pic^{0}(X/B)$ satisfying $\mathcal{L}\cong f^{*}(\mathcal{P})$.
\end{remark}


 As a consequence of the theorem \ref{mainth} and proposition \ref{twistfibration}, given
two Calabi-Yau's 3-folds $X, Y$ fibered by abelian surfaces over
the same base $S$, such that the derived categories of the fibres
are equivalent,
$$D^{b}(X_{s})\cong D^{b}(Y_{s}) \ \ \forall s\in S, \ \
D^{b}(X)\cong D^{b}(Y).$$

A more general problem, is that of Calabi-Yau's fibered by complex tori (not necessarily algebraic),
and this is also interesting from the point of view of mirror
symmetry, and it is according to SYZ mirror prediction.


R. Donagi and  T. Pantev in \cite{DP} start with an elliptic
fibration (admitting a section) and this has many associated genus
one fibrations  without sections codified by the Tate-Shafarevich group (twisted versions of the original fibration coming from replacing
the derived category of sheaves on a space with the derived category of sheaves on a gerbe over the space).
The twist of a fibration $X/B$ is given by
an automorphism $f \in Aut(X/B)$, and the respective relative jacobians are isomorphic $P ic^{0}(X/B) \cong Pic^{0}(X_{f} /B)$.
Let $X$ be an elliptic fibration and let $\alpha$ be a class in its Tate-Shafarevich group. Under rather strong assumptions on $X$ (including smoothness and integrality of the fibers) they are able to understand the relation between the Tate-Shafarevich group of $X$ and the Brauer group of an elliptic fibration $X_{\alpha}$ obtained from $X$ by twisting by the class $\alpha$. Given another class $\beta$ in the Tate-Shafarevich group (compatible with $\alpha$ in the sense specified in \cite{DP}), it is possible to construct a gerbe over $X_{\alpha}$ corresponding to the image class of $\beta$ in $Br(X_{\alpha})$, denoted by $_{\beta}X_{\alpha}$. Interchanging the role of $\alpha$ and $\beta$ one gets another gerbe $_{\alpha}X_{\beta}$ over a fibration $X_{\beta}$ locally isomorphic to $X_{\alpha}$. They conjecture that for $\pi: X\rightarrow B$ an elliptic fibration with $X$ and $B$ smooth, $\alpha$ and $\beta$ a pair of compatible classes ( in the sense of \cite{DP}) there is an equivalence of derived categories of coherent sheaves of weights 1 and -1 $$D^{b}(_{\beta}X_{\alpha}, 1)\stackrel{\sim}{\rightarrow} D^{b}(_{\beta}X_{\alpha},-1).$$

However, in dimension greater than 2 they can only prove the conjecture for smooth fibrations (smooth fibers).

\begin{remark} Due to Theorem 5.11 of \cite{Bas}, if $\mathfrak{X}$ is
a gerbe on a twisted version of $X$, and $\mathfrak{Y}$ the dual
gerbe to $\mathfrak{X}$, then there is an equivalence of
categories,
$$
\cdbc{\mathfrak Y, -1} \cong \cdbc{\mathfrak X, -1}\,.
$$


\end{remark}

\subsubsection*{SYZ mirror conjecture} (A. Stromminge, S.T. Yau, E.
Zaslov). Mirror dual Calabi-Yau manifolds should be fibered over
the same base in such a way that generic fibers are dual tori and
each fiber of any of these two fibrations is a Lagrangian
submanifold.  In particular, each of these fibrations admits a canonical
section that is an $n-$cycle having intersection number 1 with the fiber cycle.

We can take as a base, the moduli space of flat
$SU(n)$-connections $$S=Hom(\pi_{1}(\Sigma),SU(n))/SU(n),$$
as in
the geometric Langland program, and then, the Lagland dual of a
torus $F_{p}$ over a connection $v\in S$ correspond to the dual
torus $F_{p}^{\vee}$ sitting on the dual connection, 
that is, the mirror brane of a torus is the dual torus, and therefore mirror
symmetry in this case is T-duality.

\section{$K3$ fibrations.}
Now we study fibrations $\pi: X\rightarrow S$ that are fibered by polarized $K3$ surfaces. It is of interest from the point of view of Mirror Symmetry, the case in which the total fibration is Calabi-Yau, because
a Calabi-Yau space has two kind of moduli spaces, the moduli space
of inequivalent complex structures and the moduli space of
symplectic structures. Mirror Symmetry should consist in the
identification of the moduli space of complex structures on an
$n-$dimensional Calabi-Yau manifold $X$ with the moduli space of
complexified K\"ahler structures on the mirror manifold
$\widehat{X}$. There is an isomorphism between tangent to moduli
space of complex structures on $X$ and deformations of moduli of
K\"ahler structures on $X'$. 

\begin{defi} A $K3$ surface is a compact complex projective 2-dimensional smooth
variety with trivial canonical bundle and such that its first Betti
number $b_ {1}=0$ vanishes.
\end{defi}

The intersection pairing (which in the complex case coincides with the cup product),
$$< , >:\ H^{2}(X,\mathbb{Z})\times H^{2}(X,\mathbb{Z})\rightarrow \mathbb{Z}, \ \ even\ for \ all \
\alpha\ \in H^{2}(X,\mathbb{Z}),$$
 endows the cohomology group $H^{2}(X,\mathbb{Z})$ with the structure of a lattice which is even, integral, non-degenerate, unimodular and torsion free of rank 22 isomorphic to $H^{2}(X,\mathbb{Z})\cong E_{8}^{2}\oplus U(1)^{\oplus 3}$. Inside the $ H^{2}(X,\mathbb{Z})$ lattice there are two natural sublattices, the N\'eron-Severi sublattice of $X$,  $NS(X)$, consisting of divisors up to algebraic equivalence, (its rank is called the Picard number), and its orthogonal complement, the transcendental lattice $T_{X}=NS(X)^{\bot}$.
 

We have $H^{2}(X, \mathbb{Q}/\mathbb{Z})\cong H^{2}(X,\mathbb{Z})\otimes\, \mathbb{Q}/\mathbb{Z}$, and hece its cohomological Brauer group $$Br(X)\cong (H^{2}(X,\mathbb{Z})/(NS(X))\otimes \mathbb{Q}/\mathbb{Z}.$$

Let $\alpha\in Br(X)$ be represented by a C\u{e}ch 2-cocycle, given along a fixed open cover
$\{U_{i}\}_{i\in I}$ by sections $\alpha_{ijk}\in \Gamma(U_{i}\cap U_{j}\cap U_{k}, \mathcal{O}^{*}_{X})$. An $\alpha-$twisted sheaf $\mathcal{F}$ (along the fixed cover) consists of a pair

$(\{\mathcal{F}_{i}\}_{i\in I}, \{\varphi_{ij}\}_{i,j \in I}),$ where $\mathcal{F}_{i}$ is a sheaf on $U_{i}$ for all $i \in I$ and $$\varphi_{i}: \mathcal{F}_{j}|_{U_{i}\cap U_{j}}\rightarrow \mathcal{F}_{i}|_{U_{i}\cap U_{j}}$$ is an isomorphism for all $i,j \in I,$ subject to the conditions:
\begin{enumerate}
\item $\varphi_{ii}=id;$
\item $\varphi_{ij}=\varphi_{ji}^{-1};$
\item $\varphi_{ij}\circ \varphi_{jk}\varphi_{ki}=\alpha_{ijk}\cdot id.$
\end{enumerate}

Let $e\in H^{1,1}(X,\mathbf{R})\cap H^ {2}(X,\mathbf{Z})$ be the class of an ample divisor. Then $(X,e)$ is a polarized $K3$ surface.
The degree of the polarization is an integer $2d$, such that the scalar product $<e,e>=2d=2rs$ where $d, r, s$
are any positive integers and their greatest common divisor $(r,s)$ is 1.

Consider the fine moduli space $\mathcal{M}(r,e,s)$ parametrizing $e-$stable 
sheaves $E$ on $X$ such that 
$c_{0}(E)=rk(E)=r$, $c_ {1}(E)=e$ and $\chi(E)=r+s$. Here stability means Gieseker stability as considered in \cite{Sim}.
 
The vector $v=(r,l,s)\in \tilde{H}(X,\mathbf{Z})=H^{0}(X,\mathbb{Z})\oplus H^{1,1}(X,\mathbb{Z})\oplus H^{4}(X,\mathbb{Z})$ is a class in the topological $K-$theory $\mathcal{K}_{top}(X)$ of the surface and it is called Mukai vector, it is expressed in terms of a basis as $v=v_{0}+v_{2}+v_{4}$ to distinguish cohomological degrees in $v$. Given another class $w\in K_{top}(X)$, using the product in $K-$theory, we define the Mukai pairing on cohomology as
$$<v,w>=\int_{X}v_{2}w_{2}-v_{0}w_{4}-v_{4}w_{0}.$$

Due to a result of Mukai (see \cite{Muk}), if
$\hat{X}:=\mathcal{M}(r,e,s)$ is non-empty, it is a symplectic
manifold whose dimension is expressed in terms of the Mukai
self-pairing of $v=(r,e,s)$, as $dim\, \mathcal{M}(r,e,s)=<v,v>+2$.
In particular, choosing coordinates $r=2,\ l=0,\ s=4$ for the Mukai
vector, we obtain a 2-dimensional symplectic manifold, that is,  a
$K3$ surface as well, and it is considered the dual mirror surface
to $X$. We will denote it as $\widehat{X}$.

\begin{remark} Observe that we need to assume that the sheaves $E$ parametrized by the moduli space are stable sheaves in order to ensure the smoothness condition. When there do exist estrictly semistable sheaves the moduli space can be singular.
O'Kieran Grady in \cite{Kie} 
studies the moduli space of sheaves $E$ with $c_{2}(E)\geq 4$ and $c_{1}=0$ which is singular exactly along the locus parametrizing strictly semistable sheaves and the smooth locus is symplectic. The author constructs a symplectic desingularization by using Kirwan's method of blowing up loci parametrizing strictly semistable sheaves. The main observation is that the moduli space can be realized as the G.I.T quotient of a Quot scheme acted on by $PGL(N)$.
\end{remark}

\begin{defi} Two K3 surfaces $X, Y$ are said to be FM partners, if there is an equivalence  $D(X)\cong D(Y)$ of their bounded derived categories of coherent sheaves. The set of isomorphism classes of FM partners of $X$ is denoted by $FM(X)$.
\end{defi}

Observe that tensorization by a line bundle $D\in N\,(X)$, 

\noindent $T_{D}:\, \widetilde{H}\,(X)\rightarrow \widetilde{H}(X)$ defined by 

$T_{D}(r,c_{1}(X),s)=(r, c_{1}(X)+r\,D, s+r\,\frac{D^{2}}{2}+D\cdot c_{1})$ gives a birational isomorphism between $\mathcal{M}(r,c_{1}(X),s)$ and the moduli space of stable sheaves $\mathcal{M}'(r,c_{1}(X)+rD, s+r\,\frac{D^{2}}{2}+D\cdot c_{1})$ with Mukai vector $(r,c_{1}+rD,s+r\,\frac{D^{2}}{2}+D\,c_{1}(X))$.

In the same way, reflection $\delta: \widetilde{H}\,(X, \mathbb{Z})\rightarrow \widetilde{H}(X,\mathbb{Z})$ defined by $\delta\,(r,c_{1}(X),s)=(s,c_{1}(X),r)=w$, defines another birational $K3$ surface, that is, the moduli space of stable sheaves $$\mathcal{M}_{X}(r, c_{1}(X),s)$$ and $\mathcal{M}_{X}(w)$ correspond to a birational $K3$ surface, in particular they are derived equivalent.

Next theorem  due to Mukai and Orlov describes given a $K3$ surface,  the Fourier-Mukai partners in terms of the moduli of stable sheaves.

\begin{theorem}{(Derived Torelli)}\label{MO}
The following conditions are equivalent:
\enumerate
\item Mukai's duality is an involution $\hat{\hat{X}}=X$.
\item There exists an equivalence $D^{b}(X)\cong D^{b}(\hat{X})$ induced by the universal family on $X\times \hat{X}$.
\item $\hat{X}$ has a polarization  $e'$, such that $<e',e'>=2d$.
\item There exists a Hodge isometry $f: \tilde{H}(X,\mathbf{Z}) \rightarrow  \tilde{H}(\hat{X},\mathbf{Z})$. 

\end{theorem}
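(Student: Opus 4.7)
The plan is to prove the cycle of implications $(2) \Rightarrow (4) \Rightarrow (3) \Rightarrow (1) \Rightarrow (2)$, working throughout with the Mukai lattice $\tilde H(X,\mathbf{Z})$ equipped with the Mukai pairing and the weight-two Hodge structure in which $H^{0}$ and $H^{4}$ are placed in type $(1,1)$ and $H^{2}$ carries its usual Hodge decomposition. The standing setup provides $\hat X = \mathcal{M}(r,e,s)$ as a K3 surface together with a (quasi-)universal sheaf $\mathcal{U}$ on $X\times\hat X$; the coprimality $(r,s)=1$ built into the hypothesis lets one treat $\mathcal{U}$ as a genuine universal family.

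For $(2) \Rightarrow (4)$, I would define the cohomological Fourier--Mukai transform
$$\Phi^{H}_{\mathcal{U}}(\alpha)=p_{2*}\bigl(p_{1}^{*}\alpha\cdot v(\mathcal{U})\bigr),\qquad v(\mathcal{U})=\ch(\mathcal{U})\sqrt{\Td(X\times\hat X)}.$$
Grothendieck--Riemann--Roch shows that $\Phi^{H}_{\mathcal{U}}$ is compatible with the Mukai pairing; the flatness of $\mathcal{U}$ over both factors forces $v(\mathcal{U})$ to lie in the $(1,1)$ part of the Mukai Hodge structure on $X\times\hat X$, so $\Phi^{H}_{\mathcal{U}}$ is a morphism of Hodge structures; and invertibility of $\Phi_{\mathcal{U}}$ at the derived level yields invertibility at the cohomological level. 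Hence $\Phi^{H}_{\mathcal{U}}$ is a Hodge isometry. The step $(4) \Rightarrow (3)$ is then near-immediate: a Hodge isometry $f$ sends the algebraic polarization class $e\in NS(X)$ of square $2d$ to an algebraic class on $\hat X$ of the same square, and after composing with reflections in $(-2)$-classes (realised by spherical twists of autoequivalences of $D^{b}(\hat X)$) one may arrange $f(e)$ to sit inside the positive cone and so to be represented by an ample class $e'$.

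For $(3) \Rightarrow (1)$, I use Mukai's construction in the reverse direction: given the polarization $e'$ of degree $2d=2rs$ on $\hat X$, form the moduli space $\widehat{\widehat X}:=\mathcal{M}_{\hat X}(s,e',r)$ with the transposed Mukai vector. This is again a K3 surface, and the universal family $\mathcal{U}$ on $X\times\hat X$, read with factors swapped, exhibits $X$ itself as a smooth family of $e'$-stable sheaves on $\hat X$ with the correct invariants, producing a classifying morphism $X\to\widehat{\widehat X}$. Dimension count and a separating argument (different points of $X$ give non-isomorphic sheaves because $\mathcal{U}$ was universal) give $X\cong\widehat{\widehat X}$. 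Finally for $(1) \Rightarrow (2)$, once $\widehat{\widehat X}=X$ the sheaf $\mathcal{U}$ is simultaneously the universal family on both sides, and I verify Bridgeland's criterion for $\Phi_{\mathcal{U}}$ to be an equivalence: skyscraper sheaves form a spanning class for $D^{b}(X)$, the involutivity yields $\Phi_{\mathcal{U}^{\vee}}\circ\Phi_{\mathcal{U}}\simeq \mathrm{id}[-2]$ on these generators, and the Serre-functor condition ensures fully faithfulness via the Bondal--Orlov criterion.

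The hard part will be $(3) \Rightarrow (1)$: promoting the cohomological data of a polarization of the correct degree to an honest isomorphism $X\cong\widehat{\widehat X}$ of K3 surfaces requires the global Torelli theorem for K3 surfaces and careful control of the universal family, which is where the hypothesis $(r,s)=1$ becomes essential in trivialising the natural gerbe obstruction on $\mathcal{M}(r,e,s)$. A cleaner but equally deep alternative is to collapse steps (2)--(4) into a single appeal to Orlov's derived Torelli theorem, which upgrades any Hodge isometry of Mukai lattices to a Fourier--Mukai equivalence; this reorganisation trades Mukai's moduli-space geometry for the same period-map input in a different guise.
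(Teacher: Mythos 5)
First, a point of comparison: the paper does not prove this theorem at all --- it is stated as a summary of Mukai's and Orlov's results on moduli of sheaves on a K3 surface, and the text passes immediately to the fibred situation. So your proposal cannot be measured against an argument in the text and has to stand on its own. Its overall architecture (cohomological Fourier--Mukai transform for $(2)\Rightarrow(4)$, Mukai's moduli construction for the involution, a derived Torelli input to close the loop) is the standard one from the literature, and $(2)\Rightarrow(4)$ as you sketch it is essentially correct.

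The other three arrows have genuine gaps. For $(4)\Rightarrow(3)$: a Hodge isometry of Mukai lattices sends $(0,e,0)$ into the whole algebraic part $H^{0}(\hat{X},\mathbb{Z})\oplus NS(\hat{X})\oplus H^{4}(\hat{X},\mathbb{Z})$, not into $NS(\hat{X})$; composing with reflections in $(-2)$-classes does not kill the degree-$0$ and degree-$4$ components (for that you need the shifts $\ell\mapsto \ell\cdot \exp(L)$ and Eichler's transitivity of the orthogonal group on primitive vectors of fixed square), and you must still check primitivity and that the resulting class can be moved into the ample cone. For $(3)\Rightarrow(1)$: condition $(3)$ hands you an \emph{arbitrary} polarization $e'$ with $\langle e',e'\rangle=2d$, and there is no reason that $\mathcal{M}_{\hat{X}}(s,e',r)$ formed with that $e'$ recovers $X$; the classifying map $X\to\widehat{\widehat{X}}$ requires the fibres $\mathcal{U}|_{\{x\}\times\hat{X}}$ of the universal sheaf to be stable with respect to $e'$, which holds for Mukai's induced dual polarization but not for a random class of the right square, so the implication as written does not actually use its hypothesis. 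For $(1)\Rightarrow(2)$: the identity $\widehat{\widehat{X}}=X$ does not by itself yield $\Phi_{\mathcal{U}^{\vee}}\circ\Phi_{\mathcal{U}}\simeq \operatorname{id}[-2]$ on skyscrapers; the actual proof that $\Phi_{\mathcal{U}}$ is an equivalence (Mukai's, or via the Bondal--Orlov/Bridgeland criterion) rests on simplicity and pairwise orthogonality of the sheaves $\mathcal{U}_{x}$, i.e.\ on their stability, not on the involutivity of the duality. Note finally that under the standing hypotheses ($(r,s)=1$ and $\mathcal{M}(r,e,s)$ nonempty of dimension two) all four conditions are known to hold unconditionally, so an honest write-up would either establish all four outright from Mukai's theory or repair the three steps above with the global Torelli theorem; neither is yet in your sketch.
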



Let us now assume that the base is of dimension 1, even more let $B$ be an algebraic curve $C$ of genus $g$ and let $\pi:Y\rightarrow C$ be a three dimensional projective non-singular variety 
such that for each $t\in C$, $\pi^{-1}(t)=X_ {t}$ is a $K3$ surface. We must have at least 3 singular fibers. From Hironaka's theorem on the resolution of singularities, we may assume that the singular fibers are normal crossing divisors. 

Suppose that on $Y$ we have a polarization class $H$ such that
restricted to the fiber $H|_{X_ {t}}=e$. Let $m$ be the number of
points on $C$ for which the local monodromy operator is of infinite
order. Then the number of singular fibers of  $\pi$ is less or equal
to $2g-2+m$.

Since every $K3$ surface is K\"ahler by Theorem 2 of \cite{To}, the Mumford semi-stable
reduction theorem applies in this case. So we may assume that the
fibers of the map $\pi$ are given locally by $z^{k_ {1}}_ {1} \cdots
z^{k_ {n+1}}=t$,
where $k_ {j}$ is either 0 or 1. 

If $X$ is an elliptic fibration with integral fibers, then the relative Picard functor is
representable and the compactified relative Jacobian $Y$ is derived equivalent to the original one.

\subsection{The dual fibration}
Now, we want to construct, given a $K3$ fibration $p:X\rightarrow C$ with a fixed polarization, its dual fibration, that should play the role of mirror fibration.
We can assume that the fibration is Calabi-Yau. Since  the singular fibers are normal crossing divisors,  and the total space and the base are projective varieties the fibration morphism is automatically proper. We are assuming that the fibers are equidimensional and therefore the morphism is flat.  By the theorem of U. Person and H. Pinkham \cite{PP}, there exists a birational map $\varphi: X\rightarrow X'$  where $X'$ has trivial canonical bundle and it is an isomorphism over the smooth locus such that the following diagram is commutative:

 $$ \xymatrix{ X  \ar[rr]^{\varphi}  \ar[rd]^{\pi} &  & X'\ar[ld] _{\pi '}    \\
&  B & } $$

Now, by Bridgeland theorem (see \cite{Bri}), two birational  3-folds have equivalent derived categories.

The idea is to replace each fiber by its derived categorically equivalent one. The problem is that singular fibers can appear for such a fibration. A natural idea is as in the case of abelian fibrations, to contruct the dual fibration away from the singularities, and then trying to extend it over the singular locus.

\subsubsection{Relative moduli spaces}
The moduli space $\mathcal{M}^{l}(X)$ of semistable sheaves on $X$
with respect to a fixed polarization $l$, in general has infinitely
many components, each of which is a quasi-projective scheme which
may be compactified by adding equivalence classes of semistable
sheaves. An irreducible component $Y\subset \mathcal{M}^{l}(X/C)$ is
said to be fine if $Y$ is projective and there exists a universal
family of stable sheaves, that is, and object of $D^{b}(X\times Y)$
inducing a derived equivalence.

In general, $Y$ has not to be fine, but for any moduli problem of
semistable sheaves on a space $X$, we can find a twisted universal
sheaf on $X\times Y^{s}$, where $Y^{s}$, denotes the stable part of
$Y$, (see \cite{Cal}). The twisting depends only on the moduli
problem under consideration, and therefore we can view it as the
obstruction to the existence of a universal sheaf on $X\times
Y^{s}$.

\begin{theorem}\label{theo1}
 Given a non singular fibration $p:X\rightarrow C$ by $K3$ surfaces 
 with a polarization class $H$ of degree $d$,
 there exists at least a dual fibration which is derived equivalent to the original one and corresponds to a connected component of the relative
 moduli space $\mathcal{M}^{l}(X/C)$.  
\end{theorem}
\begin{proof}[Proof.]
Let $\Sigma(p) \hookrightarrow C$ be the discriminant locus of $p$,
that is, the closed subvariety in  $C$
corresponding to the singular fibers.

 For every $t \in C-\Sigma(p) $, consider the $K3$ surface $X_{t}$ and its corresponding Mukai vector $(r_{t},e, s_{t})$, where  $2r_{t}s_{t}=(H_{t})^{2}=H^{2}=2d$.
 By Theorem \ref{MO}, we associate to $X_{t}$ a 2-dimensional moduli space $\mathcal{M}(r_{t},e,s_{t})$ which is
 a FM partner, that is, it has the same derived category. 
 We observe that although the degree of the polarization is constant in $t$, the rank of the fibers can jump for some $t \in C$. However the condition of the Picard rank being one is open in the Zariski topology and it determines an open set
 $$C^{1}:=\{t\in C|\,\, NS(X_{t})=\mathbb{Z}H_{t}\}.$$
 
 Now if $s\in C^{1}$, then $H|_{X_{t}}=H_{t}=l$ is an ample divisor and since the number of Mukai partners depends on the prime decomposition
 $l^{2}=2d=2p_{1}^{e_{1}}\ldots p^{e_{m}}_{m}$, where $k\geq 0$, $e_{i}\geq 1$ and $p_{i}$ primes with $p_{i}\neq p_{j}$, if $i\neq j$,  there is a description of the FM partners of the $K3$ surface  in terms of the Mukai vectors of the moduli spaces associated (see \cite{St}). We need to single out a unique Mukai dual $K3$ surface. For example, the reflected Mukai vectors $(r_{t},e, s_{t})$ and $(s_{t},e,r_{t})$ give isomorphic moduli spaces $\mathcal{M}(r_{t},e,s_{t})\cong \mathcal{M}(s_{t},e,r_{t})$ even if the original $K3$ surfaces are not isomorphic. Thus, this choice gives rise to  different dual fibrations.
 Given a disjoint partition $I=\{j_{1},\ldots,j_{s}\}$ and $J=\{j_{s+1}\ldots, j_{m}\}$ of $\{1,\ldots, m\}$. We consider the corresponding Mukai vector $v^{I}_{J}=(p_{j_{1}}^{e_{j_{1}}}\cdot\ldots\cdot p^{e_{j_{s}}}_{j_{s}},h,p^{e_{j_{s+1}}}_{j_{s+1}}\cdot \ldots \cdot p^{e_{j_{m}}}_{j_{m}})$. If the Mukai vectors $v^{I_{1}}_{J_{1}}=(r_{1},l,s_{1})$ and $v^{I_{2}}_{J_{2}}=(r_{2},l,s_{2})$ associated to two different partitions coincide $v^{I_{1}}_{J_{1}}=v^{I_{2}}_{J_{2}} $, they give isomorphic moduli spaces $\mathcal{M}(v^{I_{1}}_{J_{1}})\cong \mathcal{M}(v^{I_{2}}_{J_{2}})$ of stable sheaves on $X_{t}$. 
 
 If the rank of the Neron Severi group $NS(X_{t})$ is bigger than 12, according to Morrison \cite{Mo},  there exists a torsion free semistable bundle on $X_{t}$, and the choice of dual $K3$ surface is unique in this case.
 
 Consider  the product  $X_ {t} \times \hat{X}_{t}$  of the corresponding $K3$ surface $X_{t}$ with its Mukai dual.


 Then we consider the universal family $\mathcal{P}_{t}$ over the product $X_{t}\times \hat{X}_{t}$. 
Proceeding as in Proposition 2.9 of \cite{MT}, extending the family
$\mathcal{P}:=\{\mathcal{P}_{t}: t \in B\}$ over the non singular
locus by Deligne theorem (\cite{Del}), the class of the polarization
is invariant by the action of the monodromy group of the singular
fibres, thus $\mathcal{P}$ extends to an object $\mathcal{F}$ over
the whole fibration.

The family does not need to be universal, but according to
C\v{a}ld\v{a}raru (see \cite{Cal}), a quasi-universal or twisted
universal family sheaf always exists and thus the dual fibration
$(X/C)^{\vee}$  is the coarse moduli space induced by $\mathcal{F}$.
The fibration constructed thus far,  is a connected component $M$ of
the relative moduli space $\mathcal{M}^{l}(X/C)$ of stable sheaves
on $p$ with respect to the polarization, (Prop. 3.4. of \cite{BM}).
There exists a unique $\alpha$ in the Brauer group $Br(M)$ of $M$
with the property that an $p^{*}_{M}\alpha^{-1}$ twisted universal
sheaf exists on $X\times M$, where $p_{M}$ is the projection map
from $X\times M$ to M, and it is the obstruction to the existence of
a universal sheaf on $X\times M$. This twisted universal sheaf
yields an equivalence (Theorem 1.2 of \cite{Cal}).
$$D^{b}(M,\alpha)\cong D^{b}(X).$$
So both fibrations are derived equivalent.

\end{proof}




\begin{coro}\label{finecomp}
There exists at least one  fine component of the relative moduli space or equivalently a sheaf $\mathcal{E}$
on a non singular fiber with fixed Mukai vector. 
\end{coro}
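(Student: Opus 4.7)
The plan is to deduce this directly from Theorem \ref{theo1}, using the numerical hypothesis on the Mukai vector that was fixed at the outset of the section. Recall that we chose a polarization $e$ on $X$ of degree $\langle e, e \rangle = 2d = 2rs$ with $(r,s) = 1$, and the Mukai vector was $v = (r, e, s)$. The conclusion of Theorem \ref{theo1} produced the dual fibration as a connected component $M \subset \mathcal{M}^l(X/C)$ together with an obstruction class $\alpha \in Br(M)$, and an $\alpha$--twisted universal sheaf on $X \times_C M$; fineness of $M$ is exactly the statement that $\alpha$ is trivial.

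First I would recall the standard criterion (going back to Mukai, and in the twisted form discussed by C\v{a}ld\v{a}raru in the reference \cite{Cal} used in the proof of Theorem \ref{theo1}) that the order of $\alpha$ divides any integer of the form $\gcd(r, e \cdot h, s)$, where $h$ runs over classes in $NS(X_t)$. Since the coprimality hypothesis $(r,s) = 1$ already forces this gcd to equal $1$ (take $h$ anything, the contribution of $r$ and $s$ alone suffices), the obstruction $\alpha$ must be trivial on the component $M$ constructed in Theorem \ref{theo1}. Consequently the twisted universal sheaf lifts to an honest universal sheaf $\mathcal{E}$ on $X \times_C M$, and $M$ is a fine component.

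Having obtained a global universal family $\mathcal{E}$, I would restrict to a non-singular fiber $X_t$ (which exists since the discriminant locus $\Sigma(p)$ is proper and the fibration is generically smooth). The restriction $\mathcal{E}|_{X_t \times M_t}$ is then a universal sheaf on the Mukai dual pair $X_t \times \widehat{X}_t$ of $K3$ surfaces, parametrizing stable sheaves with Mukai vector $v = (r, e, s)$, as required. This gives the equivalent formulation in terms of a single sheaf on a non-singular fiber with the prescribed Mukai vector.

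The only real subtlety, and the step I would check most carefully, is the gcd computation in the relative setting: one must verify that the global polarization $l$ on $X/C$ does restrict on a general fiber to a class whose Mukai pairing numerics reproduce the coprimality $(r,s) = 1$ used in the non-relative Mukai theory, and that this numerical condition is stable under specialization so that it applies uniformly to the chosen component $M$. Once this is in place, the vanishing of $\alpha$ and hence the existence of the fine component (and of the sheaf $\mathcal{E}$) is immediate from Theorem \ref{theo1}.
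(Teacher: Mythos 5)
Your argument is correct in substance, but it is worth recording that it supplies considerably more than the paper does: the paper gives no proof of this corollary at all. In the discussion following the statement, the existence of a stable sheaf $\mathcal{E}$ on a non-singular fibre $X_t$ with the prescribed invariants ($s = ch_2(\mathcal{E}) + r$, stability with respect to $H_t$) is taken as a standing \emph{assumption}, and the fineness and projectivity of the component $Y(\mathcal{E})$ is then quoted from Bridgeland--Maciocia \cite{BM}. Your route is different and more self-contained on the fineness side: you take the component $M$ and the obstruction class $\alpha \in Br(M)$ already produced in Theorem \ref{theo1}, and you kill $\alpha$ by the standard divisibility criterion (the order of $\alpha$ divides $\gcd(r, e\cdot h, s)$, hence divides $\gcd(r,s) = 1$ under the coprimality fixed at the start of the section). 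This genuinely proves fineness rather than assuming it, and it correctly ties the corollary to the numerical setup $\langle e,e\rangle = 2rs$ with $(r,s)=1$ that the paper introduces but never explicitly exploits. What your argument still inherits from Theorem \ref{theo1}, and what you should flag, is non-emptiness: the gcd criterion only shows that a non-empty component is fine, and the existence of at least one stable sheaf with the given Mukai vector on a general fibre rests on the hypothesis invoked there (a torsion-free semistable sheaf on $X_t$, guaranteed e.g.\ by Morrison's condition that $NS(X_t)$ has rank larger than $12$). Relatedly, you prove only one direction of the stated ``equivalence'' (fine component $\Rightarrow$ sheaf on a non-singular fibre, by restricting the universal family); the converse direction, which is the one the paper's subsequent discussion actually uses, requires propagating a single fibrewise sheaf to a projective component over all of $B$ via the Bridgeland--Maciocia machinery, and your closing remark about checking the gcd numerics under specialization is exactly where that work would live.
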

A closed point of a relative moduli space corresponds to a sheaf
$\mathcal{E}$ on a fibre (not to a sheaf on the whole fibration).
Let $X_{s}$ be a $K3$ surface or an abelian surface. The tangent
space at that point to the moduli space of sheaves $M(X/S)$ on the
fibration, can be identified with
$$T_{M}(\mathcal{E})\cong Ext^{1}_{S}(\mathcal{E},\mathcal{E}).$$

If $Ext^{2}_{S}(\mathcal{E},\mathcal{E})=0$, then $M$ is smooth at
$\mathcal{E}$. There are bounds (Corollary 4.5.2 of
\cite{HL}), $$ext^{1}(\mathcal{E},\mathcal{E})\geq dim_{[\mathcal{E}]}M \geq
ext^{1}(\mathcal{E},\mathcal{E})-ext^{2}(\mathcal{E},\mathcal{E}).$$ 

In general to construct such components $Y$ of the relative moduli space,
we assume that there exists a divisor $L$ on $X$ and integer numbers $r, s>0$,
such that there exists a sheaf $\mathcal{E}$ on a non singular fiber $X_ {t}$
which is stable with respect to $H_ {t}$ and $s=ch_ {2}(\mathcal{E})+r$.
The component $Y(\mathcal{E})$ containing the class of the sheaf $\mathcal{E}$ is a fine projective moduli space
and the fibration $q:Y\rightarrow B$ is equidimensional. 
Thus there is a universal family on the product $Y \times
Y(\mathcal{E})$ that gives the equivalence of the derived categories
of both fibrations over $B$.



\subsubsection{Derived categorical equivalences of the $K3$ fibration and mirror symmetry}
Now we are going to proceed in the inverse way, by considering  components $Y$ of the relative moduli space
$\mathcal{M}^{e}(X/B)$ of stable
sheaves of fixed numerical invariants $r, s$ (hence fixed Hilbert polynomial)
on the fibers of $p$ which are stable with respect to a fixed polarization $e$.  
In order to avoid parametrizations,
we can look at the component of the relative moduli space of sheaves containing line bundles in degree 0.

\begin{prop} \label{prop1} Every fine projective component $Y$
of the relative moduli space $\mathcal{M}^{e}(X/B)$ of stable
sheaves with respect to a fixed polarization $e$ is derived
equivalent to the original Calabi-Yau fibration $(X/B)$ and
therefore are derived equivalent between them. Conversely, any
projective variety derived equivalent to the original fibration is a
component of the relative moduli space.
\end{prop}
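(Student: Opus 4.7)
My plan is to split the statement into its two implications and to handle each separately, leaning heavily on the machinery already used in Theorem \ref{theo1}.

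For the direct implication, I would start from a fine projective component $Y\subset \mathcal{M}^{e}(X/B)$ and use its defining universal family $\mathcal{U}\in D^{b}(X\times_{B} Y)$ as the kernel of the relative integral functor
$$\phi_{\mathcal{U}}:D^{b}(X)\longrightarrow D^{b}(Y),\qquad \mathcal{F}\mapsto R\pi_{2*}\bigl(\pi_{1}^{*}\mathcal{F}\otimes^{L}\mathcal{U}\bigr).$$
To show $\phi_{\mathcal{U}}$ is an equivalence I would reduce to a fibrewise statement via Proposition 2.15 of \cite{HLS}: on a smooth fibre $X_{b}$ the restriction $\mathcal{U}|_{X_{b}\times Y_{b}}$ has the prescribed Mukai vector $(r,e,s)$ at every $y\in Y_{b}$, and the derived Torelli theorem stated just before Theorem \ref{theo1} supplies the equivalence $D^{b}(X_{b})\simeq D^{b}(Y_{b})$. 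Singular fibres are absorbed exactly as in the proof of Theorem \ref{theo1}, replacing $\mathcal{U}$ by a quasi-universal/twisted universal family in the sense of C\u{a}ld\u{a}raru whenever a genuine universal sheaf is obstructed. Derived equivalence of two such components then follows by composition.

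For the converse, suppose $Z$ is a projective variety equipped with an equivalence $\Psi:D^{b}(X)\xrightarrow{\sim}D^{b}(Z)$ over $B$. By Orlov's representability theorem I may write $\Psi=\phi_{\mathcal{K}}$ for a kernel $\mathcal{K}\in D^{b}(X\times_{B} Z)$ satisfying the relative Calabi--Yau condition (\ref{CYcond}). For a closed point $z\in Z$ lying over $b\in B$, the object $\mathcal{K}_{z}\in D^{b}(X_{b})$ corresponds under $\phi_{\mathcal{K}}$ to the skyscraper sheaf at $z$, so by a WIT-type argument (built on the fibrewise derived Torelli) the family $\{\mathcal{K}_{z}\}_{z\in Z}$, after a uniform cohomological shift, consists of $e$-stable sheaves on the fibres with constant Mukai vector. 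The universal property of the moduli functor then yields a classifying morphism $\iota: Z\to \mathcal{M}^{e}(X/B)$, and I would verify that $\iota$ is injective and étale by combining the equivalence property of $\phi_{\mathcal{K}}$ with the identification $T_{[\mathcal{E}]}\mathcal{M}\cong \operatorname{Ext}^{1}(\mathcal{E},\mathcal{E})$ from Corollary \ref{finecomp}. Connectedness and projectivity of $Z$ would then force $\iota$ to identify $Z$ with a full connected component.

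The hard part is the converse step: it is not automatic that the Fourier--Mukai kernel $\mathcal{K}$, even after a shift and a twist by a line bundle pulled back from $Z$, is an honest sheaf flat over $Z$ with $e$-stable fibres of fixed Hilbert polynomial. In general one either has to absorb the obstruction into a Brauer class and reinterpret the target as a gerbe over a coarse component of $\mathcal{M}^{e}(X/B)$ (the twisted-sheaf variant treated in the proof of Theorem \ref{theo1}), or check stability directly using the constancy of the Hilbert polynomial along the flat family $X\times_{B} Z\to Z$ together with the bounds on $\operatorname{Ext}^{i}(\mathcal{E},\mathcal{E})$ recorded in Corollary 4.5.2 of \cite{HL}.
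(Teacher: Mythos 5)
Your proposal is correct in outline and shares the paper's overall skeleton --- universal family as Fourier--Mukai kernel in one direction, Orlov representability in the other --- but it diverges from the paper's proof at both key steps. For the forward implication the paper does not argue fibrewise at all: it simply invokes the theorem of Bridgeland and Maciocia \cite{BM}, which already asserts that a fine component $Y$ is a non-singular projective variety, that $\widehat{p}:Y\to B$ is a $K3$ fibration, and that the integral functor with kernel the universal sheaf is an equivalence $D^{b}(Y)\cong D^{b}(X)$. Your reduction via Proposition 2.15 of \cite{HLS} plus fibrewise derived Torelli is a legitimate alternative and is more self-contained, but it shifts the burden onto verifying that the restricted kernel induces an equivalence on \emph{every} fibre, including the singular ones --- precisely the point where the black-box citation to \cite{BM} is doing work for the paper. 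For the converse the paper is far terser than you are: it cites Orlov \cite{Or1} for representability of the equivalence by a kernel satisfying a Calabi--Yau condition and then asserts that by Theorem \ref{theo1} this defines a fine component, leaving entirely implicit the step you correctly isolate as the hard part, namely that the kernel is (up to shift and twist) an honest sheaf, flat over $Z$, whose restrictions are $e$-stable with fixed Mukai vector, so that a classifying morphism $Z\to\mathcal{M}^{e}(X/B)$ exists and identifies $Z$ with a component. Your explicit discussion of how to close that gap --- either by a WIT/stability argument using the $\operatorname{Ext}$ bounds of \cite{HL}, or by absorbing the obstruction into a Brauer class as in the proof of Theorem \ref{theo1} --- supplies content the paper omits; in that sense your write-up is more honest than the original, though what you give for the converse remains a programme rather than a complete proof.
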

\begin{proof}[Proof.]
By Corollary \ref{finecomp} we can consider  components $Y$ of the relative moduli space $\mathcal{M}^{e}(X/B)$ of stable
sheaves 
on the fibers of  the CY fibration $(X/B)$,  stable with respect to
the polarization $e$.  It is a fine moduli space, so there is a
universal sheaf $\mathcal{P}$ over the product $X\times Y$.
Bridgeland and Maciocia proved in \cite{BM} that $Y$ is a
non-singular projective variety, $\widehat{p}:Y\rightarrow B$ is a
$K3$ fibration and the integral functor $D^{b}(Y)\rightarrow
D^{b}(X)$ with kernel $\mathcal{P}$  is an equivalence of derived
categories, that is, a Fourier-Mukai transform. It is Calabi-Yau
because one has $D^{b}(X)\cong D^{b}(Y)$.

Now, we start with an equivalence $D^{b}(Y)\cong D^{b}(X)$, then by
a result of Orlov \cite{Or1}, it is given by an object
$\mathcal{E}\in D^{b}(X\times_{B}Y)$ which satisfies a Calabi-Yau
condition and thus by Theorem \ref{theo1} this defines a fine
component of the relative moduli space. All the equivalences of the
original fibration are obtained in this way.
\end{proof}

Now the next question is what is the connection with the Mirror Symmetry program, that is, the component $Y$
can be interpreted as a mirror to the original fibration.  
This is related with the problem of all CY manifolds giving rise to the same derived category and for a
description of all the equivalences of it. 
Let $\mathcal{C}$ be the set of all sheaves on non singular fibers
giving rise to fine projective components of the relative moduli
space. By Proposition \ref{prop1} and Corollary \ref{finecomp},
$\mathcal{C}$ is in bijection with the set of derived equivalences
of the original fibration $X/B$.

Homological mirror symmetry 
states that there should be an equivalence of categories behind mirror duality, one category being the derived category of coherent sheaves on a Calabi-Yau manifold $X$ and the other one being the Fukaya category of the mirror manifold $X $.
As a consequence, two Calabi-Yau manifolds that have the same mirror have equivalent derived categories of coherent sheaves. 

For two sheaves $\mathcal{E}$ and $\mathcal{F}$ of  $\mathcal{C}$,
the corresponding components $Y(\mathcal{E})$ and $Y(\mathcal{F})$
of the relative moduli, have the same mirror, that is, the original
fibration $X/B$, and this is according to homological mirror
symmetry conjecture. 





\subsection{Examples of K3 fibred Calabi-Yau folds}
Finally we give some specific examples of Calabi-Yau fibrations by $K3$ surfaces.
One example of the $K3$ fibred Calabi-Yau threefold $X$ is obtained by resolving singularities of the
degree 8 hypersurface $\widehat{X}\subset \mathbf{P}_{1,1,2,2,2}$.

The K\"ahler cone of X is generated by positive linear combinations of the linear system
$H = 2l + e$ and $l$ where $e$ is an exceptional divisor coming from blowing-up
a curve of singularities and the linear system $l$ is a pencil of K3 surfaces.


\subsection{Elliptically fibered Calabi-Yau 3-fold}
In this section we study Calabi-Yau 3-folds whose fibers are elliptic $K3$ fibrations, that is $K3$ surfaces whose fibers are elliptic curves. Thus,
for elliptic Calabi Yau's, the problem is reduced to study derived
categories of elliptic curves. Any complex torus can be expressed
as the quotient of $\mathbb{C}$ by a lattice of the form
$\bigwedge_{\tau}=\{\mathbb{Z}+\mathbb{Z}\tau \}$ with $\tau\in
\mathbb{H}$ an element of the upper-half plane. We denote the
complex torus given by the lattice $\bigwedge_{\tau}$ by
$E_{\tau}$. The complex torus is called an elliptic curve if it is
an algebraic curve, in other words if it has a distinguished
point.

The Torelli theorem for smooth elliptic curves asserts that two elliptic curves are isomorphic if and only if their bounded derived categories of coherent sheaves are equivalent (see \cite{Be}).


\begin{prop}\label{propMS} Let $X/B$ and $Y/B$ be two elliptic $K3$ fibrations over $B$ such
that for all $ b\in B$, the corresponding Teichm\"uller parameters
$\tau, \tau'$ of $X_{b}$ and $Y_{b}$ are related by a linear
transformation $A \in SL(2,\mathbb{Z})$, then the two fibrations
are derived equivalent.
\end{prop}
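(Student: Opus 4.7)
The plan is to reduce the statement to a fiberwise derived equivalence and then invoke the globalization already established in the abelian-fibration setting. The starting point is the classical description of the moduli of elliptic curves as $\mathbb{H}/SL(2,\mathbb{Z})$: two elliptic curves $E_\tau$ and $E_{\tau'}$ are isomorphic as complex tori, and hence as algebraic curves, precisely when $\tau' = A\cdot \tau$ for some $A\in SL(2,\mathbb{Z})$ acting on the upper half plane by M\"obius transformation. The hypothesis therefore provides an isomorphism $X_b \simeq Y_b$ for every $b\in B$, and in particular a fiberwise derived equivalence $D^b(X_b) \cong D^b(Y_b)$, in full agreement with the Torelli theorem for elliptic curves recalled just above.

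Next, I would view $Y/B$ as a twist of $X/B$ in the sense of the twisted-fibration construction of Section \ref{abelianfib}. The matrix $A$, regarded as an element of $PSL(2,\mathbb{Z})$, acts on the defining lattice of each fiber $X_b$ as a polarization-preserving automorphism, by the lemma identifying the automorphism group of a rank-$d$ lattice with $PSL(d,\mathbb{Z})$. Over sufficiently small open sets $U\subset B$ this yields isomorphisms $f_U \colon X_U \simeq Y_U$; since $SL(2,\mathbb{Z})$ is discrete and the gluing data are integer-valued, $A$ is locally constant in $b$, so the $f_U$ can be glued after possibly stratifying $B$ into pieces on which $A$ is fixed. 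This is exactly the data producing a twisted fibration in the sense used earlier in the paper.

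The derived equivalence is then obtained from Proposition \ref{twistfibration}: since fiberwise derived equivalence is equivalent to derived equivalence of the full fibrations, the fiberwise identification already in hand lifts to $D^b(X)\cong D^b(Y)$. Concretely, the Fourier--Mukai kernel can be taken to be $\Gamma_{f*}\mathcal{L}$, where $\mathcal{L}$ is the relative Poincar\'e sheaf of the family and $\Gamma_f$ is the graph of the gluing isomorphism, exactly as in the proof of Proposition \ref{twistfibration}.

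The main obstacle will be behavior across any singular locus, together with the possible absence of a global section needed for the relative Poincar\'e construction. As in the proof of Theorem \ref{mainth}, the monodromy-invariance of the polarization class combined with Deligne's extension theorem allows the kernel constructed over the smooth locus to be extended across the discriminant $\Sigma(\pi)$. If no global section of $\pi$ exists, one first passes to a Galois cover $\sigma\colon B'\to B$ with finite Galois group $G$, constructs the kernel on the pulled-back family $X'\times_{B'} Y'$ where a section is available, and then descends the resulting equivalence via the identification $(D^b(X'/B'))^G \cong (D^b(Y'/B'))^G$ used in the Galois-descent argument of Theorem \ref{mainth}.
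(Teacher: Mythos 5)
Your first paragraph is exactly the paper's entire proof: the paper argues that $\tau'=A\cdot\tau$ with $A\in PSL(2,\mathbb{Z})$ forces $E_{\tau}\cong E_{\tau'}$, hence the fibres are isomorphic, and then invokes the Torelli theorem to conclude they are derived equivalent --- and it stops there. The statement, however, asserts a derived equivalence of the \emph{fibrations}, not merely of the fibres, and the paper leaves the passage from fibrewise to global equivalence implicit. Your remaining three paragraphs supply precisely that missing globalization: you recognize $Y/B$ as a twist of $X/B$ by a (locally constant) polarization-preserving lattice automorphism, invoke Proposition \ref{twistfibration} to lift the fibrewise equivalence to $D^b(X)\cong D^b(Y)$, and then deal with the discriminant locus and the possible absence of a section via Deligne extension and Galois descent as in Theorem \ref{mainth}. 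So the core step is identical to the paper's, but your version is more complete; the only soft spot is the claim that $A$ is locally constant in $b$ --- since the hypothesis does not fix a single $A$ for all $b$, and $A(b)$ is only determined up to the stabilizer of $\tau(b)$, the ``stratification'' you allude to deserves a sentence of justification (e.g.\ a Baire-category or continuity argument on the countable, closed loci where a given matrix works). That caveat aside, your argument buys an honest proof of the global statement, whereas the paper's proof only establishes the fibrewise one.
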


{\bf Proof.} Two lattices $\bigwedge_{\tau_{1}}$ and
$\bigwedge_{\tau_{2}}$ determine the same complex torus, when
there exists a biholomorphic map from $E_{\tau_{1}}$ to
$E_{\tau_{2}}$; $\bigwedge_{\tau_{1}}$ and $\bigwedge_{\tau_{2}}$
give holomorphic tori if and only if
$\tau_{1}=A\,\tau_{2}$ for some element  $A=\left(\begin{array}{ll} a &  b \\
c & d \end{array}\right)$ $\in PSL(2,\mathbb{Z})$ with its usual
action on $\mathbb{H}$. Then the corresponding elliptic curves are
isomorphic and by Torelli theorem they are derived equivalent.
\cqd

The second modulus of a Calabi-Yau space is the k\"ahler class
$[w]\in H^{2}(E,\mathbb{C})$ that we can parametrize with $t\in
\mathbb{H}$ as $\int_{E}w=2\pi i \, t$. Mirror symmetry for
elliptic curves is simply the interchange of $\tau$ and $t$.

\subsubsection*{The STU model}
If $X\rightarrow S$ is a $K3$ surface fibred over a curve $S$, the
generic fibre is an elliptic curve and $S$ is a projective line. The
fibered $K3$ surfaces of the STU model are themselves elliptically
fibred. It is a particular non-singular projective $CY-$ 3-fold $X$
equipped with a fibration $X\rightarrow \mathbf{P}^{1}$, $X_
{\xi}=\pi^{-1}(\xi)$. Except for 528 points $\xi \in
\mathbf{P}^{1}$, the fibers are non-singular elliptically fibered
$K3$ surfaces. The 528 singular fibers $X_{\xi}$ have exactly 1
ordinary double point singularity each one. In this particular
example of fibred CY, we know what is the dual fibration by theorem
\ref{theo1} which turns to be also the mirror fibration. For every
$t\in \mathbf{P}^{1}$, $X_ {t}$ admits a fibration $f:X_
{t}\rightarrow B$ by elliptic curves. Then $\forall b  \in B$, $X_
{t,b}$ and $Y_ {t,b}$ are related by linear transformation or mirror
transformation (according to \ref{propMS}).




\begin{thebibliography}{AAA9}
\small

\bibitem[ACGH]{ACGH} E. Arbarello, M. Cornalba, P.A. Griffiths,
and J. Harris, {\it Geometry of Algebraic Curves, Springer-Verlag,
New York 1985}.
\bibitem[AHS]{AHS} Bj\"orn Andreas, D. Hen{\'a}ndez Ruip{\'e}rez, D. S{\'a}nchez G{\'o}mez,
{\it Stable sheaves over $K3$ fibrations}, International Journal of Mathematics, to appear.
\bibitem[At]{At} M. F. Atiyah, {\it Vector bundles over an
elliptic curve}, Proc. London Math. Soc. (3) 7 (1957), 414-452.

\bibitem[Bas]{Bas} O. Ben-Bassat, {\it Twisting Derived
Equivalence}, Trans. Amer. Math. Soc. 361 (2009), no.10, 469-504.

\bibitem[Be]{Be} M. Bernardara {\it Fourier-Mukai transforms of curves and
principal polarizations}, C. R., Math., Acd. Sci. Paris 245, No.
4, 203-208 (2007).

\bibitem[BO]{BO} A. Bondal, D. Orlov, {\it Reconstruction of a variety from
the derived category and groups of autoequivalences}.  Compositio
Math. 125  (2001),  no. 3, 327--344.
\bibitem[Bri]{Bri} T. Bridgeland, {\it Flops and derived categories}, math.AG/0009053
\bibitem[BM]{BM} T. Bridgeland and A. Maciocia, {\it Fourier-Mukai transforms for $K3$ and elliptic fibrations}, J. Algebraic Geometry 11 (2002), no. 4, 629-657.
\bibitem[Cal]{Cal} A. C\v{a}ld\v{a}raru, {Non-fine moduli spaces of sheaves on K3 surfaces}, Internat. Math. Res. Notices 2002 (2002), no. 20, 1027-1056.
\bibitem[Del]{Del} P. Deligne, {\it Les intersections compl{\`e}tes de
nieveau de Hodge un}, Inventiones math. 15, 237-250 (1972).
\bibitem[Dijk]{Dijk} R. Dijkgraaf,  {\it Mirror symmetry and elliptic curves}, The moduli space of curves (Texel Island, 1994), 149-163, Progr. Math., 129, Birkhäuser Boston, MA, 1995.
\bibitem[DP]{DP} R. Donagi and T. Pantev, {\it Torus fibrations, gerbes, and duality},
math.AG/0306213, in Memoirs of the AMS.
\bibitem[GLO]{GLO} V. Golyshev, V. Lunts, and D. Orlov, {\it
Mirror Symmetry for abelian varieties}, math.AG/9812003.
\bibitem[Hit]{Hit} N. Hitchin, {\it Lectures on Special Lagrangian Submanifolds}, {\it Lectures given at the ICTP School on Differential Geometry April 1999}, math.DG/9907034.
\bibitem[HLS]{HLS} D. Hen{\'a}ndez Ruip{\'e}rez, A. C. L{\'o}pez
Mart{\'\i}n and F. Sancho de Salas, {\it Relative integral functors for
singular fibrations and singular partners}, to appear in: J. Eur.
Math. Soc., math.AG/0610319.
\bibitem[KMPS]{KMPS} A. Klemm, D. Maulik, R. Pandharipande and E. Scheidegger, {\it Noether-Lefschetz theory and the Yau-Zaslow conjecture},
arXiv:0807.2477
\bibitem[Kie]{Kie} Kieran G. O'Grady, {\it Desingularized moduli spaces of sheaves on a $K3$}, J. Reine Angew. Math. 512 (1999), 49-117.
\bibitem[HL]{HL} D. Huybrechts, Manfred Lehn {\it The geometry of moduli spaces of Sheaves}.
A Publication of the Max-Planck-Institut f\"ur Mathematik, Bonn.
\bibitem[LTY]{LTY} Bong H. Lian, Andrey Todorov, Shing-Tung Yau, {\it Maximal unipotent monodromy for complete intersection CY Manifolds},
Journal of American Mathematical Society.
\bibitem[MT]{MT} C. Mart\'inez, A. N. Todorov, {\it Derived categories of abelian fibrations},  arXiv:0901.0509.
\bibitem[Mo]{Mo} D. Morrison, {\it On K3 surfaces with large Picard number}, Invent. Math. 75 (1984) 105-121.
\bibitem[Muk]{Muk} S. Mukai, {\it Symplectic structure of the
moduli space of sheaves on an abelian or $K3$ surface}, Invent.
math. {\bf 77} (1984), 101-116.
\bibitem[MF]{MF} D. Mumford, J. Fogarty, {\it Geometric Invariant
Theory}, Second Edition, Springer-Verlag, Berlin Heidelberg New
York 1982.

\bibitem[Or1]{Or1} D. Orlov. {\it Equivalences of
derived categories and $K3$ surfaces}. Algebraic geometry, 7.  J.
Math. Sci. (New York) 84 (1997),  no. 5, 1361--1381.
\bibitem[Or2]{Or2} D. Orlov, {\it Derived categories of coherent
sheaves on abelian varieties and equivalences between them}, Izv.
RAN, Ser. Mat., v.66, N3 (2002).
\bibitem[Sim]{Sim} C. T. Simpson. Moduli of representations of the fundamental group of a smooth projective variety I. Inst. Hautes \'Etudes Sci. Publ. Math., 79 (1994), pp. 47-129.
\bibitem[SYZ]{SYZ} A. Strominger, S.-T.Yau, E. Zaslow, {\it  Mirror Symmetry is T-duality}, Nucl. Phys. B474:243-259, 1996.
\bibitem[Po]{Po} A. Polishchuk, {\it Abelian Varieties, Theta Functions and the
Fourier Transform}, Cambridge University Press.
\bibitem[PP]{PP} U. Persson, H. Pinkham, {\it Degeneration of surfaces with trivial canonical bundle}, Annals of Mathematics, 113 (1981), 45-66.
\bibitem[St]{St} P. Stellari, {\it Some remarks about the FM-partners of $K3$ surfaces with Picard rank 1 and 2}, Geom. Dedicata 108,  (2004), 1-13.
\bibitem[To]{To} A. Todorov, {\it Applications of the K\"ahler-Einstein-Calabi-Yau Metric to Moduli of $K3$ surfaces}, Inventiones math. 61, 251-265 (1980).

\end{thebibliography}
\end{document}